\let\oldsection=
\renewcommand{\subsection}[1]{\par\vspace{.18in}\noindent\addtocounter{subsection}{1}\setcounter{equation}{0}{\bf\thesubsection\hspace{9pt}#1}}
\newtheorem{thm}{Theorem}[subsection]
\let\c@fact\c@theorem\makeatother\newtheorem{lem}[thm]{Lemma}
\newtheorem{cor}[thm]{Corollary}
\newtheorem{hyp}[thm]{Hypothesis}
\newtheorem{prop}[thm]{Proposition}
\newtheorem{assumptions}[thm]{Assumptions}
\theoremstyle{definition}
\newtheorem{example}[thm]{Example}
\newtheorem{rem}[thm]{Remark}
\newtheorem{rems}[thm]{Remarks}
\newtheorem{construction}[thm]{Construction}
\numberwithin{equation}{subsection}
\numberwithin{thm}{section}
\newcommand{\AAA}{{\mathscr A}(\mathscr S)}
\newcommand{\ES}{{\mathscr E}({\mathscr S})}
\newcommand{\AS}{{\mathscr A}({\mathscr S})}
\newcommand{\htt}{{\text{\rm ht}}}
\newcommand{\sH}{{\mathcal H}}
\newcommand{\Ext}{{\text{\rm Ext}}}
\newcommand{\Amod}{A\mbox{--mod}}
\newcommand{\Hom}{\text{\rm Hom}}
\newcommand{\op}{{\text{\rm op}}}
\newcommand{\End}{\operatorname{End}}
\newcommand{\sO}{{\mathscr{O}}}
\newcommand{\blist}{\begin{list}{\rom{(\roman{enumi})}}{\setlength
{\leftmarg in}{0em} \setlength{\itemindent}{7ex}
\setlength{\labetwo-sidedlsep}{2ex}\setlength{\listparindent}{\parindent}
\usecounter{enumi}}}
\newcommand{\elist}{\end{list}}
\def\sO{{\mathcal O}}
\def\la{{\lambda}}
\begin{document}
% \begin{center}
\dedicatory{\it We dedicate this paper to the memory of J.A. Green}
%\end{center}
\title[Stratifying endomorphism algebras using exact categories.]{Stratifying endomorphism algebras using exact
categories}
 %\title[Hecke endomorphism algebras]{Extending Hecke endomorphism algebras at roots of unity}
 \author{Jie Du}
 \address{School of Mathematics and Statistics\\University of New South Wales\\ UNSW Sydney 2052
}
 \email{j.du@unsw.edu.au {\text{\rm (Du)}}}

 \author{Brian J. Parshall}
\address{Department of Mathematics \\
University of Virginia\\
Charlottesville, VA 22903} \email{bjp8w@virginia.edu {\text{\rm
(Parshall)}}}
\author{Leonard L. Scott}
\address{Department of Mathematics \\
University of Virginia\\
Charlottesville, VA 22903} \email{lls2l@virginia.edu {\text{\rm
(Scott)}}}

\thanks{This work was supported by grants from the ARC (DP120101436, Jie Du), and the Simons Foundation (\#359360, Brian Parshall; \#359383, Leonard Scott).}
\maketitle
\setcounter{tocdepth}{3}
% \tableofcontents

\begin{abstract}
This paper constructs enlargements of Hecke algebras over $\mathbb Z[t,t^{-1}]$ to certain standardly stratified algebras. The latter are obtained as endomorphism algebras of modules with dual left cell module filtrations in the sense of Kazhdan-Lusztig. A novel feature of the proofs is the use of suitably chosen exact categories to avoid difficult $\Ext^1$-vanishing conditions. 
\end{abstract}

\section{Introduction}

This paper is the second in a series aimed at proving versions of a conjecture made by the authors
in 1996. The conjecture concerns the enlargement, in a framework involving Kazhdan-Lusztig cell theory, of those Hecke endomorphism algebras  which occur naturally in the
cross characteristic representation theory of finite groups of Lie type. See \cite{DPS98} for the
original version of the conjecture, and \cite{DPS15} for a reformulation.

The \cite{DPS98} conjecture is set in the
context of a Hecke algebra $\sH$ for a finite Weyl group, using the
dual left cell modules $S_\omega$, $\omega\in\Omega$, in the sense
of \cite{Lu03}. (Thus, each $S_\omega$ is a right $\sH$-module.) The
base ring (in \cite{DPS15})  is ${\mathbb Z}[t,t^{-1}]$, where $t$
is an indeterminate. One of the conjecture's implications is that
there is a faithful right $\sH$-module $T^\dagger$, filtered by
various $S_\omega$, such that the modules
$\Delta(\omega):=\Hom_\sH(S_\omega,T^\dagger)$, with
$\omega\in\Omega$, form a stratifying system (in the sense of \cite{DPS98}) for the endomorphism
algebra $A^\dagger:=\End_\sH(T^\dagger)$. Using exact category
methods, we are able to prove this statement. See Theorem  \ref{maintheorem}
below.

 A strength of the ``stratifying system'' construction is that it is well-behaved under base change, so that the resulting
 algebra $A^\dagger\otimes k$ inherits a stratification from that of $A^\dagger$ over any Noetherian commutative ring or field
 $k$  in which $t$ is specialized to an invertible element.

The endomorphism algebras $A^\dagger$ constructed here have other good properties. In particular, based changed
versions $\widetilde A^\dagger, \widetilde T^\dagger$ can be shown to satisfy the particular ``cyclotomic'' local versions of the conjecture which were treated in \cite[Theorem 5.6]{DPS15}, using results of \cite{GGOR03} on the module categories $\mathscr O$ for rational Cherednik algebras. The present paper raises the
possibility that the \cite{DPS98} conjecture can be proved directly within the global framework of ${\mathbb Z}[t,t^{-1}]$-algebras and modules, perhaps with the present $A^\dagger$, or a close variation.

The authors began developing a general theory in \cite{DPS98} for constructing the required enlarged algebras,
centered around a set of requirements contained in what we call the  ``stratification hypothesis.''  The most difficult condition to verify
in this hypothesis is an $\Ext^1$-vanishing requirement for some of the modules involved. The present paper takes
a novel approach to this problem by building new exact categories containing the relevant modules, effectively making
the $\Ext^1$-groups involved smaller and better behaved.  While there are Specht modules and analogues for all finite Weyl groups, there are
no troublesome self-extensions, or extensions in the ``wrong order,'' because of the exact structure we construct. 
As a result, many issues of ``bad characteristic'' do not arise.  

The present paper also contains new results on exact category constructions.
In particular, the main Lemma \ref{lem2.6} gives a very general construction in an abstract setting. It very quickly leads to new
exact module categories $(\mathscr A,\mathscr E)$ for algebras $B$ over Noetherian domains $\mathscr K$, when the $K$-algebra $B_K$ obtained by base change from $\mathscr K$ to its quotient field $K$ is semisimple. The underlying additive category $\mathscr A$ is the full subcategory of $B$-mod consisting of all modules which are finitely generated
and torsion-free over $\mathscr K$. The ``exact sequences'' in $\mathscr E$ are required to be exact on certain filtrations; see Construction \ref{Construction1}. Both this construction and that of Lemma \ref{lem2.6} apply to all standard axiom systems for
exact categories.  We use the Quillen axiom system \cite{Q73}, \cite{K90},\footnote{Contrary to popular beliefs, the notion of an ``exact category'' is not exactly
well-defined. There are at least three axiom systems, all quite useful. The weakest set of axioms is that of Quillen
\cite{Q73}, as reduced to a smaller set by Keller \cite{K90}. See our Appendix A. Then there is the axiom system of
Gabriel-Roiter \cite{GR97}. Keller shows in the appendix to \cite{DRSS99} that this set is equivalent to that of Quillen
after adding the additional condition that retractions have kernels. This axiom set is generally easier to use for producing
new exact sequences from others, but the retraction axiom may be hard to verify in integral settings, or simply is not
true. It is implied by the stronger, yet simpler requirement, that all idempotents split. (An idempotent $e:A\to A$ in an additive category is called split, if $e$ can be factored as
$e=\alpha\beta$, $\alpha:B\to A$ and $\beta:A\to B$, where $\beta\alpha=1_B$, i.e., $\beta$ is a retraction.)
 The latter has several
applications, including a six term ``long exact sequence'' for $\Hom$ and $\Ext^1$ in \cite{DRSS99}, and it is used by Neeman \cite{Ne90} to build derived categories. But in the context of the proof of Theorem \ref{maintheorem} below, our main
result, idempotents do not split. For a discussion of derived categories in the Quillen framework, see \cite{K96}.} which, in particular, does not require that ``idempotents split''. This generality is especially useful when using cell modules, whose direct summands may not be cell modules, and a further Construction \ref{constructionII}, exploiting this flexibility, leads to the main theorem.% \ref{maintheorem}.

\section{Stratified algebras and exact categories} 
Throughout this section, let $\mathscr K$ be a fixed Noetherian commutative ring. Often $\mathscr K$ will also be a domain. Later, in the main application to Hecke algebras, $\mathscr K$ will be the ring ${\mathbb Z}[t,t^{-1}]$ of Laurent polynomials in a
variable $t$.  A $\mathscr K$-module
$V$ is called finite if it is finitely generated as a $\mathscr K$-module.  

By a quasi-poset, we mean a (usually finite) set $\Lambda$ with a transitive and reflexive relation $\leq$. An
equivalence relation $\sim$ is defined on $\Lambda$ by putting $\lambda\sim\mu$ if and only if $\lambda\leq\mu$ and $\mu\leq\lambda$. Let $\bar\lambda$ be the equivalence class containing $\lambda\in\Lambda$. Of course, $\bar\Lambda$
inherits a poset structure.

\subsection{Stratifying systems.} We will briefly review the notion of a (strict) {\it stratifying system}\footnote{In \cite{DPS98}, these systems were called {\it strict}  stratifying systems.  In this paper, we drop the word ``strict'' and do not consider more
general systems. (The more general stratifying systems in \cite{DPS98} allowed $\bar\mu\geq\bar\lambda$ in condition (SS3).)} for a finite
$\mathscr K$-algebra $A$ and a quasi-poset $\Lambda$. Assume that $A$ is projective over $\mathscr K$.
 For $\lambda\in\Lambda$, we require a finitely generated $A$-module $\Delta(\lambda)$, projective as a
 $\mathscr K$-module,\footnote{This condition was inadvertently omitted in the discussion \cite[p. 231]{DPS15} but explicitly
 assumed in \cite[Thm. 1.1]{DPS15} and in the original definition of a stratifying system \cite[Defn. 1.2.4]{DPS98}.}
 and a finitely generated, projective $A$-module $P(\lambda)$, together with an epimorphism $P(\lambda)\twoheadrightarrow \Delta(\lambda)$.  The following
conditions are assumed to hold:

\begin{itemize}
\item[(SS1)] For $\lambda,\mu\in\Lambda$,
$$\Hom_{ A}(P(\lambda), \Delta(\mu))\not=0\implies \lambda\leq\mu.$$

\item[(SS2)] Every irreducible $A$-module $L$ is a homomorphic image of some $\Delta(\lambda)$.

\item[(SS3)] For $\lambda\in\Lambda$, the $A$-module $P(\lambda)$ has a finite filtration by $ A$-submodules
with top section $\Delta(\lambda)$ and other sections of the form $\Delta(\mu)$ with $\bar\mu>\bar\lambda$.
\end{itemize}
When these conditions all hold, the data consisting of the $\Delta(\lambda)$, $P(\lambda)$, etc. form (by definition) a  {\it  stratifying system} for the category
 $ \Amod$ of finitely generated $A$-modules. It is also clear that $\Delta(\lambda)_{\mathscr K'}, P(\lambda)_{\mathscr K'}, \dots$ is a
 stratifying system for $ A_{\mathscr K'}$-mod {\it for any base change $\mathscr K\to \mathscr K'$}, provided $\mathscr K'$ is a Noetherian commutative
ring. (Notice that condition (SS2) is redundant, if it is known that the direct sum of the projective modules in (SS3) is a
progenerator---a property preserved by base change.)

An ideal $J$ in  the $\mathscr K$-algebra $ A$ above is called a {\it stratifying ideal} provided that
the inclusion $J\hookrightarrow A$ is $\mathscr K$-split (or, equivalently, $A/J$ is $\mathscr K$-projective)
and, for $M,N\in A/J$-mod, inflation
from $A/J$ to $A$ defines an isomorphism
\begin{equation}\label{recollement}
\Ext^n_{A/J}(M,N)\overset\sim\longrightarrow\Ext^n_{ A}(M,N),\quad\forall n\geq 0\end{equation}
of Ext-groups.\footnote{In particular, the $n=1$ case implies that $J^2=J$, see \cite{CPS90}. If an ideal $J$ is known to be projective as an $A$-module, then $J^2=J$ implies \eqref{recollement}; see Appendix B.} A {\it standard stratification} of length $n$ of $A$ is a sequence $0=J_0\subsetneq J_1\subsetneq
\cdots\subsetneq J_n= A$ of stratifying ideals\footnote{%If (\ref{recollement}) is not assumed, but projectivity of each $J_i/J_{i-1}$ is assumed, then (\ref{recollement}) for all $J=J_i$ follows for each $n$ from the $n=1$ case (idempotence of the ideals $J_i$). See Appendix B. 
The word ``stratifying'' may be replaced by ``idempotent'', given the projectivity assumption on $J_i/J_{i-1}$. See fn. 4.
This is the more usual definition of a standard stratification \cite{DPS98}, but not our focus here.} of $A$ such that each $J_i/J_{i-1}$ is a projective
$A/J_{i-1}$-module. If $ \Amod$ has a  stratifying system with quasi-poset $\Lambda$, then it has a standard stratification of length $n=|\bar\Lambda|$; see \cite[Thm. 1.2.8]{DPS98}.

 \begin{lem}\label{firstlemma} Suppose that $A$ has a  stratifying system as above. Let $\lambda,\mu\in\Lambda$.
  Then
   $$\Ext^1_A(\Delta(\lambda),\Delta(\mu))\not=0
 \implies\lambda<\mu.$$ \end{lem}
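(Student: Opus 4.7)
The plan is to use dimension shifting along the projective epimorphism $P(\lambda)\twoheadrightarrow\Delta(\lambda)$, then exploit the filtration guaranteed by (SS3) on its kernel, and finally apply (SS1) to pin down the relation between $\lambda$ and $\mu$. All the hypotheses of a stratifying system will end up being used, but the argument is essentially a bookkeeping exercise in the quasi-poset.

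First, let $K$ denote the kernel of the epimorphism $P(\lambda)\twoheadrightarrow\Delta(\lambda)$ provided by the stratifying system. Applying $\Hom_A(-,\Delta(\mu))$ to the short exact sequence $0\to K\to P(\lambda)\to\Delta(\lambda)\to 0$ and using the projectivity of $P(\lambda)$ (so that $\Ext^1_A(P(\lambda),\Delta(\mu))=0$) gives the exact sequence
\[
\Hom_A(P(\lambda),\Delta(\mu))\longrightarrow\Hom_A(K,\Delta(\mu))\longrightarrow\Ext^1_A(\Delta(\lambda),\Delta(\mu))\longrightarrow 0.
\]
Hence the non-vanishing assumption forces $\Hom_A(K,\Delta(\mu))\neq 0$.

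Second, by (SS3), the module $P(\lambda)$ has a filtration whose top section is $\Delta(\lambda)$ and whose remaining sections are of the form $\Delta(\nu)$ with $\bar\nu>\bar\lambda$; thus $K$ inherits a finite filtration all of whose sections have this form. A straightforward induction on the length of this filtration, using the left-exactness of $\Hom_A(-,\Delta(\mu))$ applied to each step $0\to K_{i-1}\to K_i\to\Delta(\nu_i)\to 0$, shows that $\Hom_A(K,\Delta(\mu))\neq 0$ implies $\Hom_A(\Delta(\nu),\Delta(\mu))\neq 0$ for at least one section index $\nu$ of the filtration of $K$, necessarily satisfying $\bar\nu>\bar\lambda$.

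Third, composing the epimorphism $P(\nu)\twoheadrightarrow\Delta(\nu)$ with a nonzero map $\Delta(\nu)\to\Delta(\mu)$ yields $\Hom_A(P(\nu),\Delta(\mu))\neq 0$, so (SS1) gives $\nu\leq\mu$, i.e., $\bar\nu\leq\bar\mu$. Combining this with $\bar\nu>\bar\lambda$ from the previous step gives $\bar\mu\geq\bar\nu>\bar\lambda$, and therefore $\lambda<\mu$ in the sense that $\lambda\leq\mu$ but $\lambda\not\sim\mu$. There is no genuinely hard step; the only point requiring a moment of care is the induction in the second paragraph, where one must avoid assuming that $\Hom_A$ is exact on the right and instead extract the nonzero hom through the left-exactness of each filtration step.
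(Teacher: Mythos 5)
Your proof is correct and follows essentially the same route as the paper's: dimension shifting along $P(\lambda)\twoheadrightarrow\Delta(\lambda)$, extracting a nonzero $\Hom$ from a section $\Delta(\nu)$ of the kernel's (SS3)-filtration, and invoking (SS1). The paper merely runs the argument contrapositively (assuming $\lambda\not<\mu$ and showing $\Hom_A(\Delta(\tau),\Delta(\mu))=0$ for every section $\tau$), whereas you argue directly; the content is the same.
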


 \begin{proof} Assume that $\lambda\not<\mu$, and let $Q(\lambda)$ be the kernel of the given epimorphism
 $P(\lambda)\twoheadrightarrow\Delta(\lambda)$. Then $\Ext^1_A(\Delta(\lambda),\Delta(\mu))$ is homomorphic
 image of $\Hom_A(Q(\lambda),\Delta(\mu))$. But  $Q(\lambda)$ has a filtration with sections of the
 form $\Delta(\tau)$ for $\bar\tau>\bar\lambda$, so that $\Hom_A(\Delta(\tau),\Delta(\mu)))=0$ since $\tau\not\leq\mu$.
 \end{proof}

 Given a finite quasi-poset $\Lambda$, a {\it height function} on $\Lambda$ is a mapping $\htt:\Lambda\to\mathbb Z$ with the
 properties that $\lambda<\mu\implies \htt(\lambda)<\htt(\mu)$ and $\bar\lambda=\bar\mu\implies\htt(\lambda)=\htt(\mu)$. Given $\lambda\in\Lambda$,  a sequence
 $\lambda=\lambda_n>\lambda_1>\cdots>\lambda_0$ is called a chain of length $n$ starting at $\lambda=\lambda_n$.
 Then the standard height function $\htt:\Lambda\to\mathbb N$ is defined by setting $\htt(\lambda)$ to be the maximal
 length of a chain beginning at $\lambda$.

 Given $A$-modules $X,Y,$ recall that the trace module ${\text{\rm trace}}_X(Y)$ of $Y$
 in $X$ is the submodule of $X$ generated by the images of all morphisms $Y\to X$.

 \begin{prop}\label{prop2.2} Suppose that $A$ has a  stratifying system as above, and let $\htt:\Lambda\to\mathbb Z$
 be a height function. Let $\lambda\in\Lambda$.  Then the $\Delta$-sections  arising from the
 filtration (SS3) of  $P(\lambda)$
 can be reordered (constructively, as in the proof below) so that, if we set
 $$ P(\lambda)_j=\text{\rm trace}_{P(\lambda)}\big(\bigoplus_{\htt(\mu)\geq j}P(\mu)\big),$$
 then $P(\lambda)_{j+1}\subseteq P(\lambda)_j$, for $j\in\mathbb Z$, and
 $$P(\lambda)_j/P(\lambda)_{j+1}$$
 is a direct sum of modules $\Delta(\mu)$ satisfying $\htt(\mu)=j$.
 \end{prop}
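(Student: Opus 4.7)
The plan has three movements. First, reorder the filtration from (SS3) so that heights decrease weakly from bottom to top. Second, identify $P(\lambda)_j$ with the initial segment of this reordered filtration containing exactly those sections of height at least $j$. Third, show the resulting graded pieces split as direct sums of the $\Delta(\mu_i)$'s. Both the reordering and the splitting rely on Lemma \ref{firstlemma}.

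For the reordering, I appeal to Lemma \ref{firstlemma}: $\Ext^1(\Delta(\nu),\Delta(\mu))\neq 0$ forces $\nu<\mu$, hence $\htt(\nu)<\htt(\mu)$. Whenever two adjacent sections $\Delta(\mu)$ (lower) and $\Delta(\nu)$ (higher) in the filtration satisfy $\htt(\nu)\geq\htt(\mu)$, the corresponding length-two subquotient is a split extension of $\Delta(\mu)$ by $\Delta(\nu)$, so the two sections can be interchanged. Iterating (bubble sort) produces a new filtration $0=M_0\subset\cdots\subset M_n=P(\lambda)$ with sections $M_i/M_{i-1}\cong\Delta(\mu_i)$ and $\htt(\mu_1)\geq\cdots\geq\htt(\mu_n)=\htt(\lambda)$; observe that $\Delta(\lambda)$ remains on top since (SS3) forces $\htt(\lambda)<\htt(\mu_i)$ for every other index $i$. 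Now let $i(j)$ be the largest index with $\htt(\mu_{i(j)})\geq j$ (and $0$ if none exists) and set $N_j:=M_{i(j)}$.

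To see $P(\lambda)_j\subseteq N_j$, note that every section of $P(\lambda)/N_j$ has height $<j$; combining with (SS1) and the projectivity of each $P(\mu)$, an induction along the filtration yields $\Hom(P(\mu),P(\lambda)/N_j)=0$ for every $\mu$ with $\htt(\mu)\geq j$, so every trace-defining morphism $P(\mu)\to P(\lambda)$ has image contained in $N_j$. For the reverse containment $N_j\subseteq P(\lambda)_j$, I induct on $i\leq i(j)$: by projectivity, the surjection $P(\mu_i)\twoheadrightarrow\Delta(\mu_i)$ lifts through $M_i\twoheadrightarrow M_i/M_{i-1}$ to a map $\phi_i:P(\mu_i)\to P(\lambda)$ whose image lies in $P(\lambda)_j$ (since $\htt(\mu_i)\geq j$), and then $M_i=M_{i-1}+\im\phi_i\subseteq P(\lambda)_j$. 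Finally, $N_j/N_{j+1}$ inherits a filtration whose sections $\Delta(\mu_i)$ all satisfy $\htt(\mu_i)=j$, and Lemma \ref{firstlemma} forces $\Ext^1(\Delta(\mu_k),\Delta(\mu_l))=0$ between any two such sections (equal heights preclude the strict inequality $\mu_k<\mu_l$); an induction on subfiltration length splits each successive extension and produces the required direct sum. The main subtlety I anticipate is in the reverse containment: a direct lift of $P(\mu_i)\twoheadrightarrow\Delta(\mu_i)$ into $P(\lambda)$ does not automatically land inside $N_j$, but stepping sequentially through the reordered filtration forces image containment in the correct submodule at each stage.
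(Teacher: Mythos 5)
Your proposal is correct and follows essentially the same route as the paper's proof: reorder adjacent sections using the Ext-vanishing of Lemma \ref{firstlemma}, identify the reordered filtration terms with the trace submodules via (SS1) together with projectivity of the $P(\mu)$, and split the equal-height layers by the same Ext-vanishing. The differences are only presentational --- you sort all heights at once and spell out both containments explicitly, whereas the paper handles one maximal height at a time and compresses the identification $P(\lambda)(j)=P(\lambda)_j$ into an appeal to (SS1).
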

 \begin{proof} First, fix $j$ maximal with a section $\Delta(\mu)$ appearing in $P(\lambda)$ such that
 $\htt(\mu)=j$.  Lemma \ref{firstlemma} implies that, whenever $M$ is a module with a submodule $D\cong \Delta(\nu)$ and $M/D\cong \Delta(\mu)$, with $\mu,\nu\in\Lambda$ and $\htt(\nu)\leq \htt(\mu)$, then $M$ is the direct sum of $D$ and a submodule $E\cong \Delta(\mu)$. Of course the quotient $M/E$ is isomorphic to $D$. This interchange of $E$ with $D$ can be repeatedly applied to adjacent $\Delta$-sections in a filtration (SS3) of $P(\lambda)$ to construct a submodule $P(\lambda)(j)$, a term in a modified filtration, which is filtered by modules $\Delta(\nu)$ with $\htt(\nu)= j$, and $P(\lambda)/P(\lambda)(j)$ filtered by modules $\Delta(\nu)$ with $\htt(\nu)<j$.
 Axiom (SS1) clearly gives $P(\lambda)(j)=P(\lambda)_j$, and $P(\lambda)_{j+1}=0$. Clearly,
 $P(\lambda)_j/P(\lambda)_{j+1}$ is a direct sum as required  by the proposition. We have not used
 projectivity of $P(\lambda)$, only its filtration properties. Induction applied to the quotient module
 $P(\lambda)/P(\lambda)_j$ completes the proof.\end{proof}

 \begin{rem} The proposition above shows that the projective modules have a canonically described filtration, given any height function $\htt$. This suggests that, if $A$ is to be realized as an endomorphism algebra of a given module, that module might also reflect that filtration in a canonical way. In \S\S3,4, this is successfully approached using semisimple base change and exact categories. The latter also builds in a height function version of the vanishing condition in Lemma
 \ref{firstlemma}.

 The proposition can also be used, in conjunction with Lemma \ref{lem2.1} below, to build stratifying ideals in an algebra Morita equivalent to $A$, and then in $A$. See \cite[Lem. 1.2.7, Thm. 1.2.8]{DPS98}. We will not need to return to this
 in this paper. \end{rem}

 \begin{lem}\label{lem2.1}  Suppose $A$ has a  stratifying system as above. Then
 $$P:=\bigoplus_{\lambda\in\Lambda} P(\lambda)$$
 is a projective generator for $A$-mod.\end{lem}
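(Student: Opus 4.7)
Plan: The module $P$ is plainly projective, being a finite direct sum of the projective $A$-modules $P(\lambda)$. To establish that $P$ is a generator of $\Amod$, I would work with the trace ideal $I:=\text{trace}_A(P)=\sum_{\phi\in\Hom_A(P,A)}\phi(P)\subseteq A$ and show $I=A$. Granting this, since $A$ is Noetherian one may choose finitely many $\phi_1,\ldots,\phi_n\in\Hom_A(P,A)$ whose images sum to $A$, producing a surjection $P^n\twoheadrightarrow A$ and hence a surjection $P^{nm}\twoheadrightarrow M$ for any finitely generated $A$-module $M$.

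To prove $I=A$, observe that $A/I$ is a finitely generated $\mathscr{K}$-module (as $A$ is $\mathscr{K}$-finite), so by the standard Nakayama reduction it suffices to check $(A/I)\otimes_\mathscr{K} k=0$ for every residue field $k=\mathscr{K}/\mathfrak{m}$. Fix such a $k$. The observation after (SS3) gives that base change along $\mathscr{K}\to k$ yields a stratifying system on the finite-dimensional $k$-algebra $A_k:=A\otimes_\mathscr{K} k$, with modules $\Delta(\lambda)_k$, $P(\lambda)_k$, and surjections $P(\lambda)_k\twoheadrightarrow\Delta(\lambda)_k$. By (SS2) applied to $A_k$, every simple $A_k$-module $L$ is a quotient of some $\Delta(\lambda)_k$, hence of $P(\lambda)_k$, hence of $P_k:=P\otimes_\mathscr{K} k$.

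The field case is then standard: let $L$ be a simple $A_k$-module and $eA_k\twoheadrightarrow L$ its projective cover; a surjection $P_k\twoheadrightarrow L$ lifts by projectivity of $P_k$ to a map $\phi\colon P_k\to eA_k$ satisfying $\phi(P_k)+\rad(eA_k)=eA_k$, and since $\rad(eA_k)$ is superfluous in the finite-dimensional module $eA_k$, $\phi$ must be surjective and (by projectivity of $eA_k$) split. Thus every indecomposable projective $A_k$-module is a direct summand of $P_k$, so $A_k$ itself is a direct summand of $P_k^N$ for some $N$, and the trace ideal of $P_k$ in $A_k$ is all of $A_k$. Because $P$ is finitely generated projective, $\Hom_{A_k}(P_k,A_k)\cong\Hom_A(P,A)\otimes_\mathscr{K} k$, identifying the trace ideal of $P_k$ in $A_k$ with the image of $I\otimes_\mathscr{K} k$ in $A_k$; this image being $A_k$ forces $(A/I)\otimes_\mathscr{K} k=0$, as required. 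The one point demanding attention is the compatibility of the trace construction with base change $\mathscr{K}\to k$; the Nakayama reduction and the field-case verification are then routine.
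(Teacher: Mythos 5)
Your argument is correct, but it takes a noticeably longer route than the paper, whose proof of this lemma is the single line ``Obvious from (SS2) and (SS3).'' The intended direct argument is: $P$ is projective because each $P(\lambda)$ is part of the given data; by (SS2) every irreducible $A$-module $L$ is a quotient of some $\Delta(\lambda)$, hence of $P(\lambda)$ via the given epimorphism $P(\lambda)\twoheadrightarrow\Delta(\lambda)$, hence of $P$; and a finitely generated projective module surjecting onto every irreducible module is a generator. For the last step one can run your trace-ideal argument entirely over $\mathscr K$: if $I=\mathrm{trace}_A(P)\neq A$, then $I$ lies in a maximal left ideal $\mathfrak m$, the surjection $P\twoheadrightarrow A/\mathfrak m$ lifts through $A\twoheadrightarrow A/\mathfrak m$ by projectivity of $P$ to a map $P\to A$ whose image together with $\mathfrak m$ generates $A$, contradicting $\mathrm{im}\subseteq I\subseteq\mathfrak m$. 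Your detour through Nakayama over $\mathscr K$, base change of the stratifying system to each residue field $k=\mathscr K/\mathfrak m$, and the identification $\Hom_{A_k}(P_k,A_k)\cong\Hom_A(P,A)\otimes_{\mathscr K}k$ is sound (and the base-change stability of the system is indeed recorded in the paper right after (SS3)), but it buys nothing here beyond what the direct argument gives, and it silently uses that every simple $A$-module lives over some residue field of $\mathscr K$ --- true since $A$ is $\mathscr K$-finite, but a point the direct argument never needs to address. One cosmetic slip: since the paper works in $A$-mod (left modules), the projective cover in your field case should be written $A_k e$ rather than $eA_k$.
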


 \begin{proof} Obvious from (SS2) and (SS3). \end{proof}

\subsection{Exact categories and the stratification hypothesis.} This section provides a way to construct stratifying
systems in a setting involving exact categories.  Previously, the construction was based on assuming a ``stratification hypothesis''  in \cite[Hyp. 1.2.9, Thm. 1.2.10]{DPS98}. The method
required a difficult $\Ext^1$-vanishing condition (see \cite[Thms. 2.3.9, 2.4.4]{DPS98}).  The advantage of the exact
category approach is  that
the relevant $\Ext^1$-vanishing conditions involve smaller spaces (and so are hopefully easier to make vanish).

Let $({\mathscr A},\mathscr E)$ be exact category in the sense of Quillen \cite{Q73}, as discussed in Appendix A using
axioms of Keller \cite{K90}.
In particular, $\mathscr A$ is an additive category and $\mathscr E$ is a class of sequences $X\to Y\to Z$ satisfying
certain properties.  In the hypotheses below we will assume the more explicit setup in which $\mathscr A$
is an additive full subcategory of mod--$B$ where $B$ is a  finite and projective $\mathscr K$-algebra. The sequences
$X\to Y\to Z\in\mathscr E$ are among the short exact sequences $0\to X\to Y\to Z\to 0$ in mod--$B$. Thus, $\mathscr A$
is an ``exact subcategory'' of mod--$B$. Note, however, we do {\it not} assume that all exact sequences in mod--$B$
whose object terms lie in $\mathscr A$ necessarily belong to $\mathscr E$. 

%Composition of inflations is inflation
Next, we discuss the variation of the stratification hypothesis based on the notion of an exact category. First,
there are several preliminary assumptions.

Assume there is given a collection of objects $S_\lambda\in
{\mathscr A}$ indexed by the elements $\lambda$ of a finite
quasi-poset $\Lambda$. For each $\lambda\in\Lambda$, $S_\lambda$ is
a subobject of $T_\lambda\in\mathscr A$. Write
$$ T: =\bigoplus_{\lambda\in\Lambda}T_\lambda\in \mathscr A.$$
With this notation, the following statements make up a straightforward version of the  ``stratification hypothesis'' 
in an exact categorical context.

\begin{hyp}\label{hypothesis} The stratification hypothesis holds in $(\mathscr A,\mathscr E)$ provided the following statements hold.
\begin{itemize}

\item[(1)] For $\lambda\in\Lambda$, there is a fixed sequence $\nu_{\lambda,0}, \cdots, \nu_{\lambda,l(\lambda)}$
where $l(\lambda)\geq 0$, $\nu_{\lambda,0}=\lambda$, and $\nu_{\lambda,i}>\lambda$ for each $i>0$. Also, there
is an increasing filtration
$$ 0=F^{-1}_\lambda\subseteq F^0_\lambda\subseteq \cdots \subseteq F_\lambda^{l(\lambda)}=T_\lambda$$
such that each inclusion $F^{i-1}_\lambda\subseteq F_\lambda^i$ is an inflation,\footnote{In an (abstract) exact
category setting, $F^{i-1}_\lambda\subseteq F^i_\lambda$  might be taken as a notation for a monomorphism
$F^{i-1}_\lambda\to F^i_\lambda$. In the case above, we intended that the sequence $F^{i-1}_\lambda\to F^i_\lambda\to F^i_\lambda/F_\lambda^{i-1}$ belongs to $\mathscr E$.}  such that $$F_\lambda^i/F_\lambda^{i-1}\cong S_{\nu_{\lambda,i}}$$
 for
$0\leq i\leq l(\lambda)$.

\item[(2)] For $\lambda,\mu\in\Lambda$, $\Hom_{\mathscr A}(S_\mu, T_\lambda)\not=0\implies\lambda\leq \mu.$

\item[(3)] For all $\lambda\in\Lambda$, $\Ext^1_{\mathscr E}( T_\lambda/ F^i_\lambda, T)=0, \quad\forall i\geq 0$.
(See Appendix A for a definition of $\Ext^1_{\mathscr E}$.)

%\item[(4)] For every $i<j$, the map $F_\lambda^i\to F_\lambda^j$ is an inflation.

\end{itemize}\end{hyp}

%When the algebra $A$  arises as an endomorphism algebra $A:=\End_B(T)$, there is a useful theory for obtaining
%a  stratifying system for $\Amod$. In fact, this is how such stratifying systems initially arose (see
%\cite{CPS96} and \cite{DPS98}). This approach is followed in the proof of the main theorem in this paper. For
%convenience, we summarize the sufficient conditions that will be used, all taken from \cite[Thm. 1.2.10]{DPS98}.
The proof of the following result parallels the analogous result in \cite[Thm. 1.2.10]{DPS98}, using Proposition \ref{prop4.3}(a).

\begin{thm}\label{thm2.5}Let ${\mathscr A},\mathscr E,B, T$ be as above. (In particular, $\mathscr A$
is an additive full subcategory of mod--$B$.)  Assume that Hypothesis \ref{hypothesis} holds in $(\mathscr A,\mathscr E)$.
Put
$$A^+=\End_B({ T})$$
 and, for $\lambda\in\Lambda$, define $\Delta(\lambda):=\Hom_B(S_\lambda,T)\in A^+$-mod.
Assume that each $\Delta(\lambda)$ is $\mathscr K$-projective. Then $\{\Delta(\lambda)\}_{\lambda\in\Lambda}$ is a
 stratifying system for $A^+$-mod.
\end{thm}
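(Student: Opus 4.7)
The plan is to take $P(\lambda):=\Hom_B(T_\lambda,T)$ and exploit the fact that $T=\bigoplus_\mu T_\mu$ makes $P(\lambda)$ the direct summand $A^+e_\lambda$ of $A^+=\Hom_B(T,T)$ corresponding to the idempotent $e_\lambda\in A^+$ projecting $T$ onto $T_\lambda$. Each $P(\lambda)$ is then a finitely generated projective $A^+$-module, and $\bigoplus_\lambda P(\lambda)\cong A^+$ is automatically a projective generator, so by the remark following the definition of a stratifying system, condition (SS2) will be redundant. The desired epimorphism $P(\lambda)\twoheadrightarrow\Delta(\lambda)$ will come from applying the contravariant functor $\Hom_B(-,T)$ to the bottom inflation $S_\lambda=F^0_\lambda\hookrightarrow T_\lambda$, with surjectivity forced by the $\Ext^1_{\mathscr E}$-vanishing of Hypothesis \ref{hypothesis}(3) through the long exact sequence of Proposition \ref{prop4.3}(a).

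To verify (SS3), I would filter $P(\lambda)$ using the filtration $0=F^{-1}_\lambda\subseteq F^0_\lambda\subseteq\cdots\subseteq F^{l(\lambda)}_\lambda=T_\lambda$ from Hypothesis \ref{hypothesis}(1). Since compositions of inflations are inflations, each inclusion $F^i_\lambda\hookrightarrow T_\lambda$ fits in a conflation $F^i_\lambda\hookrightarrow T_\lambda\twoheadrightarrow T_\lambda/F^i_\lambda$ in $\mathscr E$, and similarly the sequence $F^i_\lambda/F^{i-1}_\lambda\hookrightarrow T_\lambda/F^{i-1}_\lambda\twoheadrightarrow T_\lambda/F^i_\lambda$ lies in $\mathscr E$. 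Setting $K_i:=\Hom_B(T_\lambda/F^i_\lambda,T)$, this gives a descending chain $P(\lambda)=K_{-1}\supseteq K_0\supseteq\cdots\supseteq K_{l(\lambda)}=0$. Applying $\Hom_B(-,T)$ to the second family of conflations, Proposition \ref{prop4.3}(a) together with Hypothesis \ref{hypothesis}(3) (which kills $\Ext^1_{\mathscr E}(T_\lambda/F^i_\lambda,T)$) produces short exact sequences $0\to K_i\to K_{i-1}\to\Delta(\nu_{\lambda,i})\to 0$. The top section $K_{-1}/K_0\cong\Delta(\nu_{\lambda,0})=\Delta(\lambda)$, and for $i\geq 1$ the section $\Delta(\nu_{\lambda,i})$ satisfies $\nu_{\lambda,i}>\lambda$, hence $\bar\nu_{\lambda,i}>\bar\lambda$, which is exactly what (SS3) demands.

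Condition (SS1) reduces to the identification
$$\Hom_{A^+}(P(\lambda),\Delta(\mu))=\Hom_{A^+}(A^+e_\lambda,\Hom_B(S_\mu,T))\cong e_\lambda\Hom_B(S_\mu,T)\cong\Hom_B(S_\mu,T_\lambda),$$
after which Hypothesis \ref{hypothesis}(2) immediately yields $\lambda\leq\mu$ whenever this $\Hom$ is nonzero. The main technical hurdle is the filtration argument in the middle paragraph: one must verify that all of the short exact sequences invoked actually lie in $\mathscr E$ (via closure of inflations under composition together with the Quillen/Keller axioms recalled in Appendix A) so that Hypothesis \ref{hypothesis}(3) can be pushed through the long exact sequence of Proposition \ref{prop4.3}(a). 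One should also separately verify that $A^+=\End_B(T)$ is $\mathscr K$-projective, as required by the stratifying-system definition; this will make each $P(\lambda)$ $\mathscr K$-projective as an $A^+$-summand, matching the hypothesis that the $\Delta(\lambda)$ are $\mathscr K$-projective.
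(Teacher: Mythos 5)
Your proposal is correct and follows essentially the same route as the paper, which only sketches the argument by reference to \cite[Thm.~1.2.10]{DPS98} and Proposition \ref{prop4.3}(a): take $P(\lambda)=\Hom_B(T_\lambda,T)=A^+e_\lambda$, filter it by the $K_i=\Hom_B(T_\lambda/F^i_\lambda,T)$, and use Hypothesis \ref{hypothesis}(3) with the four-term sequence to identify the sections with the $\Delta(\nu_{\lambda,i})$. The one loose end you flag, $\mathscr K$-projectivity of $A^+$, closes itself: $A^+=\bigoplus_\lambda P(\lambda)$ and each $P(\lambda)$ is filtered by the $\mathscr K$-projective modules $\Delta(\nu_{\lambda,i})$, hence is itself $\mathscr K$-projective.
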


\begin{rem}\label{Rem2.6} The main function of condition
 (3) in Hypothesis \ref{hypothesis} in proving Theorem \ref{thm2.5} is
to ensure the existence of various exact sequences when $\Hom_{\mathscr A}(-,T)$ is applied. This exactness still works and
Theorem \ref{thm2.5} still holds if $S_\lambda$ is used in place of $T_\lambda/F^i_\lambda$, at least for the exact
categories we use. For one precise formulation, see Lemma \ref{lem3.8} below.  This discussion is necessary when
 using the Quillen axiom system. In the idempotent split context studied in \cite{DRSS99},
the functor $\Ext^1_{\mathscr E}$ is half-exact in each variable; see \cite[Thm. 1.3]{DRSS99}, who quote arguments of \cite[Thm. 1.1]{BH61}. In this case, the original version of condition (3) holds as written when all the $\Ext^1_{\mathscr E}(S_\lambda,T)$ vanish. Finally, another useful modification of Hypothesis \ref{hypothesis} (1) is obtained by replacing
$S_{\nu_{\lambda, i}}$, $i\geq 1$, by the direct sum of such objects, all with $i\geq 1$. Again, Theorem \ref{thm2.5} holds
with essentially the same arguments.
\end{rem}

\section{Some constructions of exact categories} Let $(\mathscr A, \mathscr E)$ be an exact category in the sense of
Quillen \cite{Q73}; see Appendix A. Suppose that
$\mathscr C$ is a given abelian category, and let $F:{\mathscr A}
\to {\mathscr C}$ be an additive functor.  Then $F$ is called $\mathscr E$-exact (resp., left $\mathscr E$-exact) if given
any $(X\to Y\to Z)\in \mathscr E$, the sequence $0\to F(X)\to F(Y)\to F(Z)\to 0$ (resp., $0\to F(X) \to F(Y)\to F(Z)$)
is exact in $\mathscr C$.

\begin{lem}\label{lem2.6} Let $\mathscr C$ be an abelian category.\footnote{After an earlier posting of this paper, T. B\"uhler drew our attention to Exercise
5.5 in \cite{Bu10}, which he credits to M. K\"unzer. This exercise is similar to Lemma 3.1. However, while it uses a general exact category as functor target, it does require an
apparently stronger ``admissible 
kernel preserving'' hypothesis. Indeed, in the case of an abelian category functor target, the hypothesis of Exercise 5.5 implies the hypotheses of Lemma \ref{lem2.6}. However, we do not know if there is a converse implication. The conclusions are the same for both assertions.}
Also, let $({\mathscr A}, {\mathscr E}')$ be an exact category and let $F:{\mathscr A}\to \mathscr C$ be a left ${\mathscr E}'$-exact, additive functor.
 Define $\mathscr E$ to be  the class of  those $(X\to Y\to Z)\in\mathscr E'$ such that $0\to F(X)\to F(Y)\to F(Z)
\to 0$ is exact in $\mathscr C$.  Then $({\mathscr A},{\mathscr E})$ is an exact category.\end{lem}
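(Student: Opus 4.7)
The plan is to verify the Keller--Quillen axioms for $(\mathscr A, \mathscr E)$ (as recalled in Appendix~A) one at a time. Since $\mathscr A$ is already the underlying additive category of the exact category $(\mathscr A, \mathscr E')$, every pushout, pullback, composite, and Noether-style $3\times 3$ diagram that the axioms demand already exists inside $(\mathscr A, \mathscr E')$. The entire content of the proof is therefore to check, case by case, that $F$ carries each resulting sequence to a short exact sequence in the abelian category $\mathscr C$, so that the sequence actually lies in $\mathscr E$.

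First I would dispose of the easy axioms. The class $\mathscr E$ is closed under isomorphism because $\mathscr E'$ is, and the defining condition on $F$-images is itself isomorphism invariant. A split sequence $X\to X\oplus Z\to Z$ lies in $\mathscr E'$, and because $F$ is additive it preserves biproducts, so this sequence is sent to the split short exact sequence $F(X)\to F(X)\oplus F(Z)\to F(Z)$ in $\mathscr C$; hence split sequences belong to $\mathscr E$.

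Next I would handle composition of inflations, the deflation case being dual. Given inflations $X\hookrightarrow Y$ and $Y\hookrightarrow Z$ whose cokernel sequences lie in $\mathscr E$, the axioms for $(\mathscr A, \mathscr E')$ produce a nine-term Noether diagram in $\mathscr E'$ that contains the sequences $X\to Y\to Y/X$, $Y\to Z\to Z/Y$, $X\to Z\to Z/X$, and $Y/X\to Z/X\to Z/Y$. Applying the left $\mathscr E'$-exact functor $F$ yields the same diagram in $\mathscr C$ with every row left exact, and by hypothesis the first two rows are short exact. A snake-lemma diagram chase in the abelian category $\mathscr C$ now forces the remaining two rows to be short exact as well, putting $X\to Z\to Z/X$ in $\mathscr E$. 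For pullbacks of deflations (dually, pushouts of inflations), start from a deflation $Y\twoheadrightarrow Z$ with kernel $X$ in $\mathscr E$ and a morphism $Z'\to Z$ in $\mathscr A$: the axioms for $(\mathscr A, \mathscr E')$ furnish the pullback $Y'=Y\times_Z Z'$ and a commutative square of short exact sequences in $\mathscr E'$. After applying $F$, the square in $\mathscr C$ has left exact rows, and surjectivity of $F(Y)\to F(Z)$ together with the universal property of the pullback in $\mathscr C$ forces $F(Y')\to F(Z')$ to be surjective with kernel $F(X)$, putting the new sequence in $\mathscr E$.

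The main obstacle is purely one of careful bookkeeping: in each axiom I must produce one new surjection in $\mathscr C$ from an existing one, using nothing but left exactness of $F$, additivity, and the abelian structure of $\mathscr C$. The substantive point is that once the correct diagram in $\mathscr E'$ has been chosen, the abelian category $\mathscr C$ makes the required surjectivity automatic via the snake lemma (or, equivalently, a Freyd--Mitchell element chase). Thus no extra hypothesis beyond additivity and left $\mathscr E'$-exactness of $F$ is needed, and $(\mathscr A, \mathscr E)$ is exact.
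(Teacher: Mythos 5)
Your overall strategy is the paper's: observe that, by left $\mathscr E'$-exactness, membership of a conflation of $\mathscr E'$ in $\mathscr E$ amounts to the single condition that $F$ of its deflation be an epimorphism in $\mathscr C$, and then check each Keller axiom by manufacturing one new epimorphism from given ones. However, your verification of Axiom 2 --- the pullback axiom, which is the real content of the lemma --- has a genuine gap. You assert that ``surjectivity of $F(Y)\to F(Z)$ together with the universal property of the pullback in $\mathscr C$ forces $F(Y')\to F(Z')$ to be surjective with kernel $F(X)$.'' This tacitly assumes that $F$ carries the pullback $Y'=Y\times_Z Z'$ formed in $\mathscr A$ to the pullback of $F(f)$ and $F(d)$ in $\mathscr C$. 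A merely additive, left $\mathscr E'$-exact functor has no reason to preserve pullbacks: the universal property only supplies a comparison map $F(Y')\to F(Y)\times_{F(Z)}F(Z')$, and unless you know that map is an isomorphism you cannot transfer surjectivity from the second projection of the pullback in $\mathscr C$ back to $F(d')$. The missing step --- which is exactly where the paper's proof does its work --- is Keller's observation that the pullback sits in a conflation $Y'\to Z'\oplus Y\xrightarrow{(-f,d)} Z$ belonging to $\mathscr E'$. Applying the additive, left $\mathscr E'$-exact $F$ to \emph{that} conflation, and using $F(Z'\oplus Y)\cong F(Z')\oplus F(Y)$, identifies $F(Y')$ with $\ker\bigl((-F(f),F(d))\bigr)$, i.e., with the pullback in $\mathscr C$; only then does ``the pullback of an epimorphism along a morphism is an epimorphism'' close the argument.

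Two smaller points. First, Axiom $2^\circ$ is not ``dual'' to Axiom 2 in this situation: $F$ is covariant and left exact, so pullbacks and pushouts are not interchangeable under it, and $F$ certainly does not preserve pushouts. Fortunately no preservation statement is needed there: in the pushout diagram one has $d'\circ h=d$, so $F(d')\circ F(h)=F(d)$ is an epimorphism and hence so is $F(d')$. Second, your treatment of Axiom 1 does the hard case (composition of inflations, via a $3\times 3$ diagram and a chase in $\mathscr C$ --- which is correct but unnecessary for the axiom list being verified) and dismisses the stated case as dual; the axiom actually concerns composition of deflations, where a composite of two epimorphisms is an epimorphism and nothing further is required.
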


\begin{proof} First, since $F$ is left $\mathscr E'$-exact,  $\mathscr E$ can also be described as the class of all
$(X\to Y\to Z)\in{\mathscr E'})$ such that
$F(Y)\to F(Z)$ an epimorphism in $\mathscr C$. Axioms 0, 1 in Appendix A are immediate. Consider Axiom 2 and the following commutative
diagram in $\mathscr A$
$$
\begin{CD}
X @>>> Y' @>d'>> Z' \\
@| @V{f'}VV @VVfV \\
 X @>>> Y @>d>> Z \end{CD}
$$
in which the bottom row belongs to $\mathscr E$, so that the sequence is ${\mathscr E}'$-exact (in ${\mathscr E}'$ and $F(d):F(Y)\to F(Z)$ is an epimorphism),
and the top row is the pullback of the bottom row (through the map $f$). The object $Y'$ is identified as the
kernel of the epimorphism $(-f,d):Z'\oplus Y\to Z$ in the bottom row of the commutative diagram
$$\begin{CD}
@. @.Y @>d>> Z \\
@. @. @V{\left(\begin{smallmatrix}0 \\ 1_Y\end{smallmatrix}\right)}VV @| @.  \\
@. Y' @>>> Z'\oplus Y  @>{(-f,d)}>> Z
\end{CD}$$
 The bottom row
$Y'\longrightarrow Z'\oplus Y\overset{(-f,d)}\longrightarrow Z$ is isomorphic to $Y'\overset{\left(\begin{smallmatrix}-d'\\ f'
\end{smallmatrix}\right)}\longrightarrow Z'\oplus Y\overset{(f,d)}\longrightarrow Z$, which is shown
in \cite[p. 406]{K90} to belong to $\mathscr E'$. (See also Remark \ref{RemDog}(d) in Appendix A below for an alternate argument.) Now
apply the functor $F$, and use the natural isomorphism $F(Z'\oplus Y)\cong F(Z')\oplus F(Y)$ to obtain the following
commutative diagram
$$\begin{CD}
@. @. F(Y) @>F(d)>> F(Z) @>>> 0\\
@. @. @VVV @| @.  \\
0 @>>> F(Y') @>>> F(Z')\oplus F(Y)  @>{(-F(f),F(d))}>> F(Z) @>>> 0.
\end{CD}$$
As noted above, the morphism $F(d)$ is an epimorphism. Thus, since $F$ is left exact, the bottom row is exact, and it identifies $F(Y')$ as the
the pullback in the abelian category $\mathscr C$ of $F(f)$ and $F(d)$. Since $F(d)$ is an epimorphism, so is its pullback $F(d')$.
This verifies Axiom 2.

Finally, we must check that Axiom 2$^\circ$ holds. Consider a commutative pushout diagram
\begin{equation}\label{d2}\begin{CD} 0 @>>> X@>i>> Y @>d>> Z @>>> 0\\
@. @VgVV @VhVV @| @. \\
0 @>>> X' @>i'>> Y' @>d'>> Z @>>> 0
\end{CD}\end{equation}
in which the top row belongs to ${\mathscr E}$. We must prove that $X'\to Y'\to Z$ also belongs to ${\mathscr E}$.
But the diagram (\ref{d2}) gives the following commutative diagram
\begin{equation}\label{d3}\begin{CD}
@. @.Y @>{=}>> Y \\
@. @. @VhVV @VdVV @.\\
0 @>>> X' @>i'>> Y' @>d'>> Z @>>> 0
\end{CD}.\end{equation}
After applying $F$, we get the following commutative diagram
$$\begin{CD}
@. @.F(Y) @>{=}>> F(Y) \\
@. @. @VF(h)VV @VF(d)VV @.\\
0 @>>>F(X') @>F(i')>> F(Y') @>F(d')>> F(Z) @>>> 0
\end{CD}$$
in which $F(d)$ is an epimorphism, since the top row of (\ref{d2}) belongs to ${\mathscr E}$. This implies that
$F(d')$ is an epimorphism, and, hence, the bottom row of (\ref{d3}) is exact in $\mathscr C$.  Thus, the
bottom row of (\ref{d2}) belongs to ${\mathscr E}$, and Axiom 2$^\circ$
holds, completing the proof of the lemma.
\end{proof}

We now make some assumptions which will often be in force for the rest of this paper.

\begin{assumptions}\label{assumptions1}
Let $\mathscr K$ be a fixed Noetherian integral domain with fraction field $K$. Let
$H$ be $\mathscr K$-algebra  which is finite and torsion-free as a $\mathscr K$-module. Assume that $H_K$ is semisimple. The isomorphism
classes of irreducible right $H_K$-modules are indexed by a finite set $\Lambda$. Given $\lambda\in\Lambda$, let $E_\lambda$ denote a representative  from the corresponding irreducible class. Fix a function $\htt:\Lambda\to\mathbb Z$,
taking, for convenience, non-negative values. (We call $\htt$ a height function, though there is no immediate assumption
that $\Lambda$ is a quasi-poset.)

Let mod-$H$ be the category of $\mathscr K$-finite right $H$-modules, and let mod--$H_K$ be category of finite
dimensional right $H_K$-modules. Let $\mathscr A$ be the full subcategory of mod--$H$ which consists of $\mathscr K$-torsion-free $H$-modules.
\end{assumptions}

 For $N\in{\rm{\text{mod--}}}H_K$, the height function $\htt$ induces a natural increasing (finite) filtration
 $$0=N^{-1}\subseteq N^0\subseteq \cdots\subseteq N^i\subseteq N^{i+1}\subseteq \cdots\subseteq N,$$
 defining $N^i$ to be the sum of all irreducible right $H_K$-submodules isomorphic to $E_\lambda$ with $\htt(\lambda)\leq i$. (Thus, $N^j=N$ for all $j\geq|\Lambda|$.) Then, if $M\in\mathscr A$, there is an induced filtration
$$ 0=M^{-1}\subseteq M^0\subseteq\cdots\subseteq M^i\subseteq M^{i+1}\subseteq\cdots\subseteq M$$
on $M$ defined by
setting
$$M^i=M\cap (M_K)^i, \quad i\geq 0.$$
 Observe that each $M^i\in{\mathscr A}$, as are the modules
$M/M^i$ and $M^i/M^{i-1}$. Also, $(M^i/M^{i-1})_K$ is a direct sum of $H_K$-modules $E_\lambda$ with
$\htt(\lambda)=i$.

Our goal is to show that the above data define the structure of an exact category on the additive category $\mathscr A$
of $\mathscr K$-torsion-free right $H$-modules, once an appropriate family $\mathscr E$ of conflations $X\to Y\to Z$
has been defined.

First, we require more preliminaries, including the proposition below.
Note that if $X\overset{f}\longrightarrow Y$ is a map in $\mathscr A$, then $f$ induces a map
$f_i:X^i\to Y^i$ and a map $\overline{ f_i}:X^i/X^{i-1}\to Y^i/Y^{i-1}$ for each integer $i$. In addition, if
$g:Y\to Z$ is another morphism in $\mathscr A$, then $(gf)_i=g_if_i$ and $\overline{ g_i}\overline{f_i}=
\overline {g_if_i}$ for each $i$. Finally, if $f:X\to Y$ is an inclusion $X\subseteq Y$, then
\begin{equation}\label{definition} X\cap (Y_K)^i= X\cap (X_K)^i = X^i, \quad\forall i.\end{equation}

In the following proposition, we continue to assume that Assumptions \ref{assumptions1} are in force.

\begin{prop}\label{prop2.7} Suppose $X,Y,Z\in\mathscr A$ and $0\to X\overset f\to Y\overset g\to Z\to 0$ is an exact sequence in mod--$H$. Then, for
each $i\in\mathbb Z$, the following statements hold.

(a) The sequence $0\to X^i\to Y^i\to Z^i$ is exact in mod--$H$.

(b) The sequence $0\to X^h\to Y^h\to Z^h\to 0$ is a short exact sequence in mod--$H$, for each $h\leq i$, if and only if
$$0\to X^j/X^{j-1}\to Y^j/Y^{j-1}\to Z^j/Z^{j-1}\to 0$$
is exact for each $j\leq i$.

(c) The sequence $0\to X^j/X^{j-1}\to Y^j/Y^{j-1}\to Z^j/Z^{j-1}\to 0$ is a short exact sequence for all $j\leq i$ if and only if
$Y^g/Y^{g-1}\to Z^g/Z^{g-1}$ is an epimorphism for all $g\leq i$.

\end{prop}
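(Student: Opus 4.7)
My plan rests on one structural observation and one technical lemma. The observation is that because $H_K$ is semisimple, the operation $N \mapsto N^i$ on $\text{mod--}H_K$ is exact in both variables: for a submodule $N' \hookrightarrow N$ one has $(N')^i = N' \cap N^i$, and for a quotient $N \twoheadrightarrow N''$ the image of $N^i$ is $(N'')^i$. Pulling these facts back to the integral level via $M^i = M \cap (M_K)^i$, I obtain $f(X^i) \subseteq Y^i$, $g(Y^i) \subseteq Z^i$, and the key intersection identity $f(X) \cap Y^i = f(X^i)$ (from $X_K \cap (Y_K)^i = (X_K)^i$). This immediately gives part (a): $X^i \to Y^i$ is injective since $f$ is, and the kernel of $Y^i \to Z^i$ equals $f(X) \cap Y^i = f(X^i)$.

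The technical lemma I will establish first is the following lifting statement: if $Y^g/Y^{g-1} \twoheadrightarrow Z^g/Z^{g-1}$ is an epimorphism for every $g \leq i$, then $Y^h \to Z^h$ is surjective for every $h \leq i$. I prove this by induction on $h$ from the trivial base: given $z \in Z^h$, lift its image $\bar z \in Z^h/Z^{h-1}$ to some $\bar y \in Y^h/Y^{h-1}$ by hypothesis, pick any $y \in Y^h$ above $\bar y$, then correct by a preimage $y' \in Y^{h-1}$ of $g(y) - z \in Z^{h-1}$ (existing by the inductive hypothesis) to obtain $g(y - y') = z$.

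With (a) and this lemma in hand, the rest of the proposition falls out quickly. For the forward direction of (b), I will apply the snake lemma to
\[
\begin{CD}
0 @>>> X^{h-1} @>>> Y^{h-1} @>>> Z^{h-1} @>>> 0 \\
@. @VVV @VVV @VVV @. \\
0 @>>> X^h @>>> Y^h @>>> Z^h @>>> 0
\end{CD}
\]
whose columns are inclusions; the resulting SES of cokernels is the desired quotient sequence. For the converse of (b), the quotient-level SES hypothesis supplies in particular the epimorphism hypothesis of the lemma, so $Y^h \to Z^h$ is surjective for all $h \leq i$, and combining with (a) produces the full-level SES. For part (c), the direction SES $\Rightarrow$ epi is trivial, and for the converse the lemma and (a) together give a full-level SES, which the forward direction of (b) then converts into a quotient-level SES.

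The substantive point — and where I expect the real work to lie — is precisely the lifting lemma: left-exactness of the quotient sequence at $Y^j/Y^{j-1}$ is \emph{not} formal from (a), because an element $y \in Y^j$ with $g(y) \in Z^{j-1}$ decomposes over $K$ as $y'_K + x_K$ with $y'_K \in (Y_K)^{j-1}$ and $x_K \in X_K$, yet the individual summands need not be integral. The epimorphism hypothesis of (c) is exactly what allows the inductive construction of an integral replacement, and this is the mechanism that ties the three parts of the proposition together.
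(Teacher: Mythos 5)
Your proof is correct and follows essentially the same route as the paper: part (a) via the intersection identity $X\cap (Y_K)^i=X^i$, and parts (b), (c) by passing between the full and quotient levels of the filtration using a $3\times 3$/snake-lemma diagram together with a level-by-level surjectivity lifting. The only difference is organizational: you prove explicitly the lifting lemma that the paper dismisses as ``easy to see'' in part (c), and you also use it for the converse of (b), where the paper instead runs an induction combined with a $3\times 3$-type lemma from MacLane.
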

\begin{proof} Throughout this proof, the word ``exact'' means exact  in the usual sense in the category of right $H$-- (or possibly $H_K$--) modules.

We first prove (a). Without loss of generality, we can assume that the map $f:X\to Y$ is an inclusion of a submodule. Clearly,
each $f_i$ is an inclusion. Also, $g_if_i=(gf)_i=0$, so that the image of $f_i$ is contained in the kernel
of $g_i$. To prove the reverse inclusion, let $y\in\ker g_i$. Thus, $y\in\ker g$, so $y\in X$. But also $y\in Y^i\subseteq
(Y_K)^i$. So $y\in X\cap (Y_K)^i=X^i$, as per (\ref{definition}). This proves (a).

We next prove (b). For every integer $j$, we have a $3\times 3$ diagram
\begin{equation}\label{dia}\begin{CD}
X^{j-1} @>>> Y^{j-1} @>>> Z^{j-1}\\
@VVV @VVV @VVV\\
X^j @>>> Y^j @>>> Z^j\\
@VVV @VVV @VVV \\
X^j/X^{j-1} @>>> Y^j/Y^{j-1} @>>> Z^j/Z^{j-1}
\end{CD}
\end{equation}
in which the columns are short exact sequences. Then assume that each $0\to X^h\to Y^h\to Z^h\to 0$ is exact
for each $h\leq j$. Then the $3\times 3$ Lemma \cite[p. 49]{Mac94} implies that $0\to X^j/X^{j-1}\to Y^j/Y^{j-1}
\to Z^j/Z^{j-1}\to 0$ is exact for all $j\leq i$.

Conversely, assume that, for any $j\leq i$, the sequence $0\to X^j/X^{j-1}\to Y^j/Y^{j-1}\to Z^j/Z^{j-1}\to 0$ is exact. By induction,
we can assume that $0\to X^{i-1}\to Y^{i-1}\to Z^{i-1}\to 0$ is exact. In addition, the composition map
$X^i\to Y^i\to Z^i$ is zero. Since the top and bottom rows of (\ref{dia}) are short exact sequences, \cite[Ex. 2, p. 51]{Mac94}
implies the middle horizontal line is a short exact sequence, as required.

As for (c), the $\implies$ direction is obvious. Conversely, it is easy to see that  if the maps $Y^g/Y^{g-1}\to Z^g/Z^{g-1}$
are epimorphisms for all $g\leq i$, then each map $Y^h\to Z^h$, $h\leq i$, is an epimorphism. Now apply (a) and (b).
\end{proof}
\medskip

In the context of Proposition \ref{prop2.7}(b), it is easy to give examples where $0\to X^h\to Y^h\to Z^h\to 0$ is not a
short exact sequence.

\begin{example} \label{ex3.4}
Let $\mathscr K=\mathbb Z$, and let $H={\mathbb Z}C_2$, where
$C_2=\{1,s\}$ is the cyclic group of order 2.  Let $S_2$ be the trivial module for $H$. It is free of rank 1 over $\mathbb Z$ with basis vector $1$. Let
 $S_1$ be the sign module for $H$, also free of rank 1 with basis vector denoted $\epsilon$ (so that $s\cdot
\epsilon:=-\epsilon)$. Consider the short exact sequence $0\to X\overset\alpha\longrightarrow Y\overset{\beta}\longrightarrow
Z\to 0$ of torsion-free $H$-modules where $X=S_2$, $Y=H$, and $Z=S_1$.  Here $\alpha(1)=1+s$, and $\beta(1)=-\beta(s)=-\epsilon$. Assign $S_{2, \mathbb Q}$ height 2 and $S_{1,\mathbb Q}$ height 1, then 
$$\begin{cases}X^{1}= S_2^{1}=0;\\ Y^{1}=H^{1}= {\mathbb Z}(1-s);\\ Z^{1}=Z.\end{cases}
$$
Then $\beta(Y^{ 1})=2{\mathbb Z}\epsilon$, so that $Y^{ 1}\to Z^{1}$ is not surjective. Thus, taking $h=1$, the sequence
$0\to X^h\to Y^h\to Z^h\to 0$ is not exact. However, with the same height function, but  
 interchanging the roles of $X$ and $Z$, the short exact sequence $0\to Z\overset{\phi}\to Y\overset{\psi}\to X\to 0$ (where 
$\phi(\epsilon)=1-s$, and $\psi(1)=\psi(s)=1$) has the property that $0\to Z^h\to Y^h\to X^h\to 0$ is exact
for all $h$.  If the height function assignment is reversed, then the sequence $0\to X^h\to Y^h\to Z^h\to 0$ is also exact for all $h$. \end{example}

\begin{construction}\label{Construction1}  {\it Keep Assumptions \ref{assumptions1} with $H$, $\mathscr A$ and $\htt$ as described
there.} Now define $\mathscr E$ as follows. A pair $(\iota,\delta)$ of morphisms $X\overset{\iota}\to Y$ and $Y\overset{\delta}\to Z$ in $\mathscr A$ belongs to $\mathscr E$ if and only if the sequence
$0\to X\overset \iota\to Y\overset \delta\to Z\to0$ and the induced sequences $0\to X^i/X^{i-1}\to Y^i/Y^{i-1}\to Z^i/Z^{i-1}\to 0$ ($i\in\mathbb N$) are all exact in mod-$H$.
%, belongs to $\mathscr E$ provided that, for each  integer $i$, the sequence $0\to X^i/X^{i-1}\to Y^i/Y^{i-1}\to Z^i/Z^{i-1}\to 0$ is a short  exact sequence of right $H$-modules. 
\end{construction}

We note that, by
Proposition \ref{prop2.7}, each sequence $0\to X^i\to Y^i\to Z^i\to 0$ is also exact. The Example \ref{ex3.4} shows that the height function determines which sequences are exact (i.e., belongs to $\mathscr E$).

\begin{thm}\label{thm2.8} The pair $(\mathscr A,\mathscr E)$  is an exact category.\end{thm}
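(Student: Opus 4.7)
The plan is to apply Lemma \ref{lem2.6} iteratively, peeling off one level of the height filtration at each step. Let $\mathscr E_{-1}$ be the class of all short exact sequences $0\to X\to Y\to Z\to 0$ in $\modH$ whose three terms lie in $\mathscr A$. Since an extension of $\mathscr K$-torsion-free modules is again $\mathscr K$-torsion-free, the subcategory $\mathscr A$ is closed under extensions in the abelian category $\modH$, and so $(\mathscr A,\mathscr E_{-1})$ is an exact category by the standard ``extension-closed subcategory'' construction. This serves as the base case.

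For each $i\geq 0$ let $F_i\colon\mathscr A\to\modH$ be the additive functor $M\mapsto M^i/M^{i-1}$, and recursively set
\[
\mathscr E_i=\{(X\to Y\to Z)\in\mathscr E_{i-1}\mid 0\to F_i(X)\to F_i(Y)\to F_i(Z)\to 0\text{ is exact}\}.
\]
The key inductive check is that $F_i$ is left $\mathscr E_{i-1}$-exact; once this is established, Lemma \ref{lem2.6} with abelian target $\mathscr C=\modH$ immediately produces the exact category $(\mathscr A,\mathscr E_i)$. To verify left-exactness, take a sequence in $\mathscr E_{i-1}$. The inductive hypothesis combined with Proposition \ref{prop2.7}(b) gives a short exact sequence $0\to X^{i-1}\to Y^{i-1}\to Z^{i-1}\to 0$, while Proposition \ref{prop2.7}(a) always supplies the left exact sequence $0\to X^i\to Y^i\to Z^i$. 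Feeding the diagram
\[
\begin{CD}
X^{i-1} @>>> Y^{i-1} @>>> Z^{i-1} @>>> 0\\
@VVV @VVV @VVV @. \\
0 @>>> X^i @>>> Y^i @>>> Z^i
\end{CD}
\]
(top row short exact, bottom row left exact, all three columns monic so every kernel vanishes) into the snake lemma yields the required left exact sequence $0\to X^i/X^{i-1}\to Y^i/Y^{i-1}\to Z^i/Z^{i-1}$.

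Because $\Lambda$ is finite and $\htt$ takes only finitely many nonnegative values, some $N$ satisfies $M^N=M$ for every $M\in\mathscr A$; then $F_i\equiv 0$ for $i>N$, so $\mathscr E_N=\mathscr E$ and $(\mathscr A,\mathscr E)$ is the exact category obtained at the last stage. The main delicacy I anticipate is not any individual axiom check but the bookkeeping around the inductive hypothesis: each application of Lemma \ref{lem2.6} only forces short-exactness on one new graded layer, and the snake-lemma step for $F_i$ needs \emph{exactly} that every strictly lower layer has already been made short exact. Example \ref{ex3.4} shows why one cannot hope to carry out the whole argument in a single step with $F=\bigoplus_i F_i$ applied to $\mathscr E_{-1}$; the stepwise refinement of the exact structure is essential. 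Once the alignment of hypotheses is in place, the proof reduces to a telescoping application of Lemma \ref{lem2.6}.
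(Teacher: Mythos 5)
Your proof is correct, but it takes a genuinely different route from the paper's. The paper also feeds the ambient exact structure $\mathscr E'=\mathscr E_{-1}$ into Lemma \ref{lem2.6}, but only once, using the functor $F(M)=\bigoplus_{i\ge 0}M^i$ (the filtration terms rather than the graded pieces). Proposition \ref{prop2.7}(a) says this $F$ is left $\mathscr E'$-exact outright, and Proposition \ref{prop2.7}(b) identifies the sequences on which it becomes short exact with exactly the class $\mathscr E$ of Construction \ref{Construction1}, so no induction is needed. You are right that $\bigoplus_i F_i$ with $F_i(M)=M^i/M^{i-1}$ is \emph{not} left $\mathscr E'$-exact (in Example \ref{ex3.4} the $i=2$ layer is $0\to S_2\to H/\mathbb Z(1-s)\to 0$, where the first map is an index-two inclusion, so exactness fails at the middle term); your iterative refinement --- making layer $i$ left exact only after all layers $j<i$ have been forced to be short exact, via Proposition \ref{prop2.7} and the snake lemma --- is a legitimate workaround, and the snake-lemma step is carried out correctly. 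It just costs on the order of $|\Lambda|$ applications of Lemma \ref{lem2.6} plus the bookkeeping you describe, where one application suffices. Your closing claim that one ``cannot hope'' to do it in a single step is therefore too strong: the obstruction is only to the particular functor $\bigoplus_i F_i$, not to the single-step strategy itself.
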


\begin{proof} First observe that there is the standard exact category $({\mathscr A},{\mathscr E}')$. Here ${\mathscr E}'$ consists of
all exact triples $X\to Y\to Z$ in mod--$H$ with $X,Y,Z$ objects in $\mathscr A$ (i.e., $X,Y,Z$ are $\mathscr K$-torsion-free). Let $\mathscr C$ be the abelian category of right $H$-modules (not necessarily finitely generated), and $F:\mathscr A\to \mathscr C$ the functor
$FX=\bigoplus_{i\geq 0}X^i$. Then $F$ is left $\mathscr E'$-exact, and $\mathscr E$ (as defined in Construction \ref{Construction1})
consists of precisely those $(X\to Y\to Z)\in {\mathscr E}'$ for which $0\to F(X)\to F(Y)\to F(Z)\to 0$ is a short exact sequence in
$\mathscr C$. (Apply Proposition \ref{prop2.7}(b).) Thus, $({\mathscr A}, {\mathscr E})$ is an exact
category by Lemma \ref{lem2.6}.
\end{proof}

\begin{rem}Though the construction of $(\mathscr A,\mathscr E)$ requires the tools of exact category theory,
they can all be interpreted here in the larger (and more familiar) category mod--$H$.  Similar remarks
apply to the second construction below.
\end{rem}

\medskip\noindent
\begin{construction}\label{constructionII}{\it  Keep Assumptions \ref{assumptions1}.
For each integer $i$, let ${\mathscr S}_i$ be a full, additive
subcategory of $\mathscr A$ such that if $S\in {\mathscr S}_i$, then
$S_K$ is a direct sum of irreducible right $H_K$-modules
having height $i$.\footnote{We think of ${\mathscr S}_i$ as a
special class of objects in $\mathscr A$; the stated condition on
$S_K$ is necessary, but not always sufficient for membership in
${\mathscr S}_i$.} (If $i$ is not in the image of the height
function, then put ${\mathscr S}_i:=[0]$.)  Let $\mathscr S$ be the
set-theoretic union of the ${\mathscr S}_i$. Let ${\mathscr
A}({\mathscr S})$ be the full subcategory of $\mathscr A$ above
having objects $M$ satisfying  ${M}^j/{M}^{j-1}\in {\mathscr S}_j$
for all $j$ (or, equivalently, $M^j/M^{j-1}$ is in $\mathscr S$ for
all integers $j$).

Let $\mathscr E$ be as in Construction \ref{Construction1}. Define ${\mathscr E}({\mathscr S})$ to be the class of those conflations $X\to Y\to Z$ in $\mathscr E$ such that $X,Y,Z\in{\mathscr A}({\mathscr S})$ and with the additional property that, for each integer $i$,
$$0\to X^i/X^{i-1}\to Y^i/Y^{i-1}\to Z^i/Z^{i-1}\to 0$$ is a split short exact sequence
in mod--$H$. (Thus, by definition, $\mathscr E({\mathscr S})\subseteq \mathscr E$.)}\end{construction}

\begin{thm} \label{thm2.9} The pair $({\mathscr A}({\mathscr S}),{\mathscr E}({\mathscr S}))$ is an exact category.\end{thm}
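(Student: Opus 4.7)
The plan is to verify the Quillen/Keller exact-category axioms of Appendix A for $(\mathscr{A}(\mathscr{S}), \mathscr{E}(\mathscr{S}))$ directly, using the ambient exact category $(\mathscr{A}, \mathscr{E})$ of Theorem \ref{thm2.8} as backbone. Two recurring tools will do the real work: first, each $\mathscr{S}_i$ is closed under finite direct sums (being a full additive subcategory of $\mathscr{A}$), so any short exact sequence on associated gradeds whose middle term decomposes as a direct sum automatically stays inside $\mathscr{S}_i$; second, split short exact sequences in mod-$H$ are preserved under pullback and pushout. Lemma \ref{lem2.6} does not appear to apply directly, since split exactness in mod-$H$ (as opposed to split exactness in mod-$H_K$, which would be strictly weaker) is not captured by exactness of any obvious left-exact functor to an abelian target.

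For Axiom 2, start with a conflation $X\to Y\xrightarrow{d} Z$ in $\mathscr{E}(\mathscr{S})$ and a morphism $f\colon Z'\to Z$ with $Z'\in \mathscr{A}(\mathscr{S})$, and form the pullback $Y'=Y\times_Z Z'$ in mod-$H$. Theorem \ref{thm2.8} already places $X\to Y'\to Z'$ in $\mathscr{E}$, so $0\to X^i/X^{i-1}\to (Y')^i/(Y')^{i-1}\to (Z')^i/(Z')^{i-1}\to 0$ is exact. A five-lemma argument identifies $(Y')^i/(Y')^{i-1}$ with the mod-$H$ pullback of $Y^i/Y^{i-1}\to Z^i/Z^{i-1}$ along $(Z')^i/(Z')^{i-1}\to Z^i/Z^{i-1}$; since the latter sequence is split by the defining hypothesis of $\mathscr{E}(\mathscr{S})$ and splittings are pullback-stable, one obtains $(Y')^i/(Y')^{i-1}\cong X^i/X^{i-1}\oplus (Z')^i/(Z')^{i-1}\in \mathscr{S}_i$ by additivity. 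Hence $Y'\in \mathscr{A}(\mathscr{S})$ and the pullback conflation belongs to $\mathscr{E}(\mathscr{S})$; Axiom 2$^\circ$ is proved dually. For Axiom 1, given inflations $X\hookrightarrow Y\hookrightarrow Y'$ from conflations $X\to Y\to Z$ and $Y\to Y'\to W$ in $\mathscr{E}(\mathscr{S})$, the $3\times 3$ lemma supplies $0\to Z\to Y'/X\to W\to 0$; iterating the splitting hypothesis yields $(Y')^i/(Y')^{i-1}\cong X^i/X^{i-1}\oplus Z^i/Z^{i-1}\oplus W^i/W^{i-1}$, so $(Y'/X)^i/(Y'/X)^{i-1}\cong Z^i/Z^{i-1}\oplus W^i/W^{i-1}\in \mathscr{S}_i$, placing $Y'/X$ in $\mathscr{A}(\mathscr{S})$ with the required split associated graded. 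The analogous statement for compositions of deflations follows dually. Axiom 0 and the kernel-cokernel convention on conflations are immediate from $(\mathscr{A},\mathscr{E})$ and the fullness of $\mathscr{A}(\mathscr{S})$.

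The main obstacle I expect is bookkeeping rather than ideas: at each axiom, the short exact sequence appearing on the associated gradeds must be shown to coincide with the corresponding pullback/pushout/composition computed inside mod-$H$, because the functor $M\mapsto \bigoplus_i M^i/M^{i-1}$ is only left-exact in general and becomes exact solely after restriction to $\mathscr{E}$. Once these five-lemma identifications are in place, additivity of each $\mathscr{S}_i$ together with pullback/pushout-stability of split exactness completes the verification. Conceptually, the splitting condition built into $\mathscr{E}(\mathscr{S})$ is precisely what compensates for the potential failure of each $\mathscr{S}_i$ to be closed under extensions.
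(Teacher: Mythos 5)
Your proposal is correct and follows essentially the same route as the paper: the paper likewise verifies the Quillen/Keller axioms directly, dispatching Axioms 0 and 1 as routine, and for Axiom 2 (and dually 2$^\circ$) invokes exactly your key point that the pullback conflation already lies in $\mathscr E$ by Theorem \ref{thm2.8} and that the section-level splitting (hence membership of $Y'$ in ${\mathscr A}({\mathscr S})$) follows because the pullback of a split short exact sequence is split. Your write-up merely makes explicit the identification of $(Y')^i/(Y')^{i-1}$ with the section-level pullback and the $3\times3$-lemma bookkeeping for Axiom 1, details the paper leaves to the reader.
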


\begin{proof} The first two axioms in Appendix A are easily verified. (Note again that ${\mathscr E}({\mathscr S})\subseteq\mathscr E$.)
To check Axiom 2, consider the diagram
$$\begin{CD}
X @>>> Y' @>>> Z'\\
@VVV @VVV @VfVV \\
X @>>> Y @>>> Z\end{CD}$$
where the bottom row is in ${\mathscr E}({\mathscr S})$ and the top row is a pullback (in mod--$H$) with $Z'\in {\mathscr A}({\mathscr S})$. However, since the bottom row lies in $\mathscr E$, we have that $X'\to Y'\to Z'$ also belongs to
$\mathscr E$. The issue is whether it splits section by section (which, in particular, would imply that $Y'\in{\mathscr A}({\mathscr S})$). This splitting at the section level follows easily from the fact that the pullback of a split short exact sequence is split. A similar
argument gives Axiom 2$^\circ$.\end{proof}

The following lemma shows a common vanishing condition leads to expected exact sequences.

\begin{lem} \label{lem3.8} Suppose that $X\in{\mathscr A}({\mathscr S})$ satisfies $\Ext^1_{{\mathscr E}({\mathscr S})}(S,X)=0$,
for all $S\in\mathscr S$. Let $E\to F\to G$ belong to ${\mathscr E}({\mathscr S})$. Then
$$0\to\Hom_{{\mathscr A}({\mathscr S})}(G,X)\to \Hom_{{\mathscr A}({\mathscr S})}(F,X)
\to \Hom_{{\mathscr A}({\mathscr S})}(E,X)\to 0$$
is a short exact sequence of $\mathscr K$-modules.\end{lem}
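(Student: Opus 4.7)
The plan is to split the claim into its routine left-exactness part and its substantive surjectivity part, and to prove the latter by induction along the height filtration. Left exactness of $0\to\Hom_{\mathscr{A}(\mathscr{S})}(G,X)\to\Hom_{\mathscr{A}(\mathscr{S})}(F,X)\to\Hom_{\mathscr{A}(\mathscr{S})}(E,X)$ is immediate from $E\to F\to G$ being a conflation in the exact category $(\mathscr{A}(\mathscr{S}),\mathscr{E}(\mathscr{S}))$, since $\Hom(-,X)$ is always left exact on conflations in an exact category. The remaining work is to show that an arbitrary $\phi:E\to X$ extends to some $\psi:F\to X$.

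I would construct $\psi$ incrementally, building compatible $H$-linear maps $\psi^i:F^i\to X$ satisfying $\psi^i|_{E^i}=\phi|_{E^i}$ and $\psi^i|_{F^{i-1}}=\psi^{i-1}$, starting from $\psi^{-1}=0$. Since the height filtration is bounded, the induction terminates at some $n$ with $F^n=F$ and $E^n=E$, and $\psi:=\psi^n$ is the required extension. For the inductive step, the equality $E^i\cap F^{i-1}=E^{i-1}$, which follows from $E^i=E\cap(F_K)^i$, lets $\phi|_{E^i}$ and $\psi^{i-1}$ glue along $E^{i-1}$ into a well-defined $H$-linear map $\psi^i_0:E^i+F^{i-1}\to X$. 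The hypothesis that $E\to F\to G$ belongs to $\mathscr{E}(\mathscr{S})$ together with Proposition~\ref{prop2.7} gives that $E^i/E^{i-1}\to F^i/F^{i-1}\to G^i/G^{i-1}$ is split short exact; in particular $F^i/(E^i+F^{i-1})\cong G^i/G^{i-1}$, yielding a candidate conflation $E^i+F^{i-1}\to F^i\to G^i/G^{i-1}$.

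The crucial claim is that this candidate actually lies in $\mathscr{E}(\mathscr{S})$. A direct height computation, using that $(E^i+F^{i-1})_K=(E_K)^i+(F_K)^{i-1}$ is $H_K$-supported in height at most $i$, identifies the height-filtration pieces $(E^i+F^{i-1})^j$ as $F^j$ for $j<i$ and as the whole of $E^i+F^{i-1}$ for $j\ge i$. Hence $E^i+F^{i-1}\in\mathscr{A}(\mathscr{S})$, and the induced graded sequence is the identity $F^j/F^{j-1}\to F^j/F^{j-1}\to 0$ for $j<i$, the already-split sequence $E^i/E^{i-1}\to F^i/F^{i-1}\to G^i/G^{i-1}$ for $j=i$, and the zero sequence for $j>i$; each is split short exact, so the candidate indeed belongs to $\mathscr{E}(\mathscr{S})$. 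With membership secured, the hypothesis $\Ext^1_{\mathscr{E}(\mathscr{S})}(S,X)=0$ applies to $S=G^i/G^{i-1}\in\mathscr{S}_i\subseteq\mathscr{S}$; the pushout of this conflation along $\psi^i_0$ (legitimate by Axiom~$2^{\circ}$) therefore splits, and composing with $F^i\to$ pushout furnishes the desired $\psi^i$. The main obstacle I expect is precisely this $\mathscr{E}(\mathscr{S})$-membership verification, especially the integral identification of the height filtration on $E^i+F^{i-1}$; everything else is formal manipulation inside the exact category.
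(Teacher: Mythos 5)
Your proof is correct, but it takes a genuinely different route from the paper's. The paper argues by induction on the length of the height filtration together with the $3\times 3$ lemma: it first settles the special case $F^{h-1}\to F^h\to F^h/F^{h-1}$ (where the contravariant four-term exact sequence of Proposition A.2(a) plus the hypothesis $\Ext^1_{\mathscr{E}(\mathscr{S})}(F^h/F^{h-1},X)=0$ immediately gives surjectivity), then applies $\Hom(-,X)$ to the three columns $M^{h-1}\to M\to M^h/M^{h-1}$ ($M=E,F,G$) of the filtration diagram, and deduces exactness of the middle row of the resulting $3\times3$ diagram from exactness of the columns, of the (split) top row, and of the bottom row (by induction). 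You instead build the lifting of $\phi:E\to X$ explicitly by climbing the height filtration of $F$, gluing along $E^i\cap F^{i-1}=E^{i-1}$ and extending across each step via a pushout that splits by the vanishing hypothesis. The price of your approach is the extra verification that $E^i+F^{i-1}\to F^i\to G^i/G^{i-1}$ actually lies in $\mathscr{E}(\mathscr{S})$ --- your identification of the height filtration on $E^i+F^{i-1}$ (namely $F^j$ for $j<i$ and everything for $j\ge i$) is correct and is exactly the point that must be checked; this step has no counterpart in the paper's argument, which never leaves the given filtration subquotients. What your version buys is a completely constructive extension and a transparent display of precisely where the section-by-section splitting and the $\Ext^1$-vanishing enter; what the paper's version buys is brevity and a purely diagrammatic argument requiring no new integral computations.
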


\begin{proof} The lemma is obvious, from Proposition \ref{prop4.3}(a), when $F=F^h$ for some $h\in\mathbb Z$ and $E=F^{h-1}$, since $G=F^h/F^{h-1}\in{\mathscr S}_h$.

This special case applies to all columns of the  commutative diagram, upon applying the functor $\Hom_{{\mathscr A}(\mathscr S)}(-,X)$ to the diagram
$$\begin{CD}
E^{h-1} @>>> F^{h-1} @>>> G^{h-1}\\
@VVV @VVV @VVV\\
E @>>> F @>>> G \\
@VVV @VVV @VVV\\
E^h/E^{h-1} @>>> F^h/F^{h-1} @>>> G^h/G^{h-1}.
\end{CD}
$$
Here, $h$ is chosen so that $F=F^h$, and it follows
that $E=E^h$ and $G=G^h$.
Moreover, we can assume the top row of the resulting diagram is exact by induction (on, say, the number of indices $j$ for which $F^j/F^{j-1}\not=0$). Finally, the bottom split row, of course, remains split exact in the new $3\times 3$ diagram.
Since the middle row of the latter satisfies the hypothesis of \cite[Ex. 2, p. 51]{Mac94}, it defines a short exact sequence. This proves the lemma.
\end{proof}

%\subsection{Forms of exact categories}

%\subsection{Idempotent splitting and an observation of Swan}
\section{Some further results for $({\mathscr A}({\mathscr S}),{\mathscr E}({\mathscr S}))$ and construction of $T^\dagger$}
In this section, we consider further the exact category $({\mathscr A}({\mathscr S}), \mathscr E({\mathscr S}))$
introduced in  Construction \ref{constructionII}. In particular, Assumptions 3.2 are in force.

\begin{prop}\label{propA} Let $M,N\in{\mathscr A}(\mathscr S)$, and let $h$ be any integer.

(a) There is a
natural isomorphism
$\Ext^1_{\ES}(N^h,M)\cong\Ext^1_{\ES}(N^h, M^h)$.

(b) In particular, if $S\in{\mathscr S}_h,$ we have
$$\Ext^1_{\ES}(S,M)\cong\Ext^1_{\ES}(S, M^h).$$

(c) Assume that $S\in{\mathscr S}_h$. Suppose that $M=M^h$ and $M^{h-1}=0$. Then $\Ext^1_{\ES}(S,M)=0$.
\end{prop}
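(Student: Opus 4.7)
The plan is to prove (c) directly from the defining axioms of $\ES$, to derive (a) by applying the standard six-term $\Hom$/$\Ext^1$ exact sequence to a canonical conflation, and to obtain (b) as a specialization of (a).

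For (c), I would take any conflation $\xi : M \to Y \to S$ in $\ES$. The hypotheses $M=M^h$, $M^{h-1}=0$, combined with $S\in \mathscr{S}_h$ (so $S=S^h$, $S^{h-1}=0$), force $M^i/M^{i-1}=0$ and $S^i/S^{i-1}=0$ for every $i\neq h$. Since the $\ES$-axiom requires each level-$i$ graded sequence to be split exact in mod--$H$, one gets $Y^i/Y^{i-1}=0$ for $i\neq h$. Hence $Y^{h-1}=0$ and $Y=Y^h$, so the unique nontrivial graded piece coincides with $\xi$ itself, which therefore splits. This yields $\Ext^1_{\ES}(S,M)=0$.

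For (a), I would first verify that $M^h \hookrightarrow M \twoheadrightarrow M/M^h$ is a conflation in $\ES$. It is short exact in mod--$H$; the quotient $M/M^h$ is $\mathscr{K}$-torsion-free (it embeds into $M_K/M_K^h$), and all three terms lie in $\AS$. The graded pieces of $M^h$ and of $M/M^h$ have complementary supports in degrees $\leq h$ respectively $>h$, so each level-$i$ sequence is trivially split with one outer term zero. Applying the standard six-term exact sequence in the Quillen exact category $(\AS,\ES)$ for the functor $\Hom_{\AS}(N^h,-)$ to this conflation, assertion (a) reduces to showing $\Hom_{\AS}(N^h, M/M^h)=0$ and $\Ext^1_{\ES}(N^h, M/M^h)=0$.

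For the $\Hom$ vanishing, I would pass to $H_K$: $(N^h)_K$ and $(M/M^h)_K = M_K/M_K^h$ have disjoint isotypic supports (heights $\leq h$ versus $>h$), so semisimplicity of $H_K$ forces any $H_K$-morphism to vanish, and $\mathscr{K}$-torsion-freeness of $M/M^h$ promotes this to the integral statement. For the $\Ext^1$ vanishing, I would take any conflation $M/M^h \to Y \to N^h$ in $\ES$ and invoke Proposition~\ref{prop2.7}(b) at level $h$ (valid because all graded pieces are exact) to see that $0 \to (M/M^h)^h \to Y^h \to (N^h)^h \to 0$ is short exact in mod--$H$; since $(M/M^h)^h=0$ and $(N^h)^h=N^h$, this is an isomorphism $Y^h \cong N^h$, and composing its inverse with $Y^h\hookrightarrow Y$ yields a canonical section of $Y\twoheadrightarrow N^h$, splitting the conflation. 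Finally, (b) is the specialization $N=S$ of (a), since $S\in\mathscr{S}_h$ implies $S^h=S$.

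The hardest step is expected to be the vanishing $\Ext^1_{\ES}(N^h, M/M^h)=0$: it hinges on combining Proposition~\ref{prop2.7}(b) with the automatic nullity $(M/M^h)^h=0$ to extract a canonical section at level $h$, and this is exactly the point where the interplay between the height filtration, the semisimplicity of $H_K$, and the exact structure of $\ES$ is genuinely exploited.
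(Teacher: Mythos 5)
Your part (c) is correct and is essentially the paper's argument (the paper simply says it is immediate from the definition of $\ES$), and your verification that $M^h\to M\to M/M^h$ is a conflation in $\ES$ and that $\Hom_{\AS}(N^h,M/M^h)=0=\Ext^1_{\ES}(N^h,M/M^h)$ is sound. The gap is in the pivot of part (a): you invoke ``the standard six-term exact sequence in the Quillen exact category $(\AS,\ES)$'' for $\Hom_{\AS}(N^h,-)$. No such sequence is available here. In the Quillen/Keller axiom system used throughout this paper --- where idempotents are explicitly \emph{not} assumed to split --- the covariant $\Hom$ functor yields only the four-term sequence of Proposition \ref{prop4.3}(a), ending at the connecting map into $\Ext^1$; the six-term version requires the split-idempotent hypothesis of \cite{DRSS99}, which the paper's footnote and the remark following Proposition \ref{prop4.4} explicitly rule out. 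The one six-term sequence the paper does establish, Proposition \ref{prop4.4}, is proved \emph{using} Proposition \ref{propA}(b), so appealing to it here would be circular.

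Concretely, what your reduction requires is exactness at the two $\Ext^1$ terms, i.e.\ that the pushforward $\Ext^1_{\ES}(N^h,M^h)\to\Ext^1_{\ES}(N^h,M)$ is injective (exactness at the fourth spot, given $\Hom(N^h,M/M^h)=0$) and surjective (exactness at the fifth spot, given $\Ext^1(N^h,M/M^h)=0$). That bijectivity \emph{is} the assertion of part (a), so the appeal to the six-term sequence assumes what is to be proved. The paper instead argues directly: with $N=N^h$, the truncation functor $(-)^h$ sends a conflation $(M\to Y\to N)\in\ES$ to $(M^h\to Y^h\to N)\in\ES$, giving a natural map $\eta(N,M):\Ext^1_{\ES}(N,M)\to\Ext^1_{\ES}(N,M^h)$, and the inverse is given by pushout along $M^h\hookrightarrow M$; checking that these two constructions are mutually inverse (up to the equivalence defining $\Ext^1_{\ES}$) is precisely the content your argument omits. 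To repair your proof you would need to supply these two verifications, at which point you have reproduced the paper's argument rather than bypassed it.
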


\begin{proof} Without loss of generality, take $N=N^h$ in (a). Obviously, there is a natural transformation
$$\eta(N,M):\Ext^1_{\ES}(N,M)\to\Ext^1_{\ES}(N,M^h)$$
which sends $(M\to Y\to N)\in{\ES}$ to $(M^h\to Y^h\to N^h)\in{\ES}$.
 The inverse is obtained by pushout. This proves (a), and (b) follows. Finally, (c) follows immediately
 from the definition of $\mathscr E(\mathscr S)$.
\end{proof}

We also have the following result. It is immediate from the definitions.
\begin{lem}\label{critical} Let $M\in \mathscr A(\mathscr S)$. If $S\in{\mathscr S}_h$, then $\Ext^1_{\ES}(S,M^{h-1})\cong\Ext^1_H(S,M^{h-1})$.\end{lem}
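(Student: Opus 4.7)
My plan is to establish the asserted isomorphism by exhibiting mutually inverse natural maps. Since, by definition, $\mathscr E(\mathscr S)\subseteq\mathscr E$ and every member of $\mathscr E$ is a genuine short exact sequence in $\modH$, sending a conflation $M^{h-1}\to Y\to S$ in $\mathscr E(\mathscr S)$ to the underlying extension defines a natural map
$$\Phi:\Ext^1_{\ES}(S,M^{h-1})\longrightarrow \Ext^1_H(S,M^{h-1}).$$
Equivalence of extensions is clearly preserved. To prove $\Phi$ is a bijection, I would show that every $H$-module extension $0\to M^{h-1}\to Y\to S\to 0$ automatically lies in $\mathscr E(\mathscr S)$, providing a two-sided inverse.

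The key step is to pin down the height filtration of $Y$. Rationalizing gives a short exact sequence $0\to (M^{h-1})_K\to Y_K\to S_K\to 0$ of $H_K$-modules, which splits by semisimplicity. Since $(M^{h-1})_K$ is concentrated in heights $\leq h-1$ and $S_K$ is concentrated in height $h$, the height filtration of $Y_K$ satisfies $(Y_K)^g=(M^{h-1})_K$ for $g\leq h-1$ and $(Y_K)^g=Y_K$ for $g\geq h$. Intersecting with $Y$, this forces $Y^{h-1}=M^{h-1}$ and $Y^g=Y$ for $g\geq h$.

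Consequently, for each integer $g$ the induced sequence of sections
$$0\to (M^{h-1})^g/(M^{h-1})^{g-1}\to Y^g/Y^{g-1}\to S^g/S^{g-1}\to 0$$
degenerates: for $g<h$ it reduces to the identity on $M^g/M^{g-1}$, for $g=h$ to the identity on $S$, and for $g>h$ to the zero sequence. In particular each such sequence is split short exact, so the extension belongs to $\mathscr E$ and moreover to $\mathscr E(\mathscr S)$; along the way one also sees that $Y\in\AS$ since all of its sections already lie in $\mathscr S$ (because $M^{h-1}\in\AS$ and $S\in\mathscr S_h$). Thus $\Phi$ has a well-defined inverse, proving the lemma.

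There is no real obstacle here; the only point worth checking carefully is the computation $Y^{h-1}=M^{h-1}$, which one gets from the fact that the kernel of $Y\to S$ is $M^{h-1}$ combined with the filtration of $Y_K$ just described. This is why the authors justifiably call the lemma immediate from the definitions.
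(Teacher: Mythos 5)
Your argument is correct and is precisely the verification the paper leaves implicit when it declares the lemma ``immediate from the definitions'': every $H$-module extension of $S$ by $M^{h-1}$ automatically has degenerate (hence split) section sequences and middle term in ${\mathscr A}({\mathscr S})$, so the forgetful map $\Ext^1_{{\mathscr E}({\mathscr S})}(S,M^{h-1})\to\Ext^1_H(S,M^{h-1})$ is bijective. One small imprecision: for $g<h-1$ one has $(Y_K)^g=\bigl((M^{h-1})_K\bigr)^g$, not all of $(M^{h-1})_K$; this corrected identity is what your later (correct) identification of the $g$-th section sequence with the identity on $M^g/M^{g-1}$ actually uses.
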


\begin{prop}\label{prop4.4} Let $S\in {\mathscr S}_h$,  let $M\in{\AAA}$, and let $j$ be a non-negative integer. There is a 6-term exact sequence
$$\begin{aligned}
0\to\Hom_{\AAA}(S, M^j)\to&\Hom_{\AAA}(S, M)\to \Hom_{\AAA}(S,M/M^j) \\ \overset{f}\to &
\Ext^1_{\ES}(S, M^j)\overset{g}\to\Ext^1_{\ES}(S, M)\overset{}\to\Ext^1_{\ES}(S,M/M^j).\end{aligned}$$
It is compatible with the first 6 terms of the long exact sequence for the functor $\Hom_{\mathscr A}(S,-)=\Hom_{H}(S,-)$ applied to
the short exact sequence $0\to M^j\to M\to M/M^j\to 0$ of $H$-modules. This sequence belongs to ${\mathscr E}({\mathscr S})$.  \end{prop}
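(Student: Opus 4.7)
The plan is to first verify that the short exact sequence $0 \to M^j \to M \to M/M^j \to 0$ belongs to $\mathscr E(\mathscr S)$, and then to derive the 6-term exact sequence as the standard $\Hom$-$\Ext^1$ long exact sequence for this conflation in the exact category $(\mathscr A(\mathscr S),\mathscr E(\mathscr S))$ provided by Theorem \ref{thm2.9}.

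For the first step, I would compute the induced filtrations directly from Construction \ref{Construction1}: using (\ref{definition}) one checks that $(M^j)^i = M^i$ for $i\le j$ and $(M^j)^i = M^j$ for $i>j$, while $(M/M^j)^i = 0$ for $i\le j$ and $(M/M^j)^i = M^i/M^j$ for $i>j$. Consequently, in each degree $i$, the induced sequence of graded pieces
$$0 \to (M^j)^i/(M^j)^{i-1} \to M^i/M^{i-1} \to (M/M^j)^i/(M/M^j)^{i-1} \to 0$$
reduces either to an identity map followed by $0$ (when $i\le j$) or to $0$ followed by an identity map (when $i>j$); both cases are trivially split. Since the total sequence is already in $\mathscr E$ by Proposition \ref{prop2.7}, the splitting at the graded level places it in $\mathscr E(\mathscr S)$ by the definition in Construction \ref{constructionII}.

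For the 6-term sequence I would apply $\Hom_{\mathscr A(\mathscr S)}(S,-)$ to this conflation. Left exactness of $\Hom$ on $\mathscr E(\mathscr S)$-conflations (a general property of exact categories) gives exactness of the first three terms, and these agree with the $\Hom_H$ terms since morphisms in $\mathscr A(\mathscr S)$ between its objects are just $H$-module maps. The connecting map $\delta \colon \Hom_{\mathscr A(\mathscr S)}(S,M/M^j) \to \Ext^1_{\mathscr E(\mathscr S)}(S,M^j)$ is produced by pullback: given $f\colon S\to M/M^j$, Axiom 2 for $\mathscr E(\mathscr S)$ (Theorem \ref{thm2.9}) yields a pullback conflation $M^j \to Y \to S$ in $\mathscr E(\mathscr S)$, and $\delta(f)$ is its class. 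Exactness at each of the three right-hand positions is then a formal diagram chase in $(\mathscr A(\mathscr S),\mathscr E(\mathscr S))$, using Axioms 2 and $2^\circ$ in the style of the classical construction of the $\Hom$-$\Ext^1$ sequence (cf.\ \cite{Bu10}).

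Compatibility with the $\Hom_H/\Ext^1_H$ long exact sequence is then immediate: the natural transformation $\Ext^1_{\mathscr E(\mathscr S)}(S,-) \to \Ext^1_H(S,-)$, which sends an $\mathscr E(\mathscr S)$-class to its underlying class of $H$-extensions, commutes with the pullback definition of the two connecting maps against the common sequence $0\to M^j\to M\to M/M^j\to 0$, and is the identity on the Hom terms. The main obstacle is not the 6-term construction itself, which is formal in any exact category, but rather the verification in step one that the filtrations on $M^j$ and $M/M^j$ interact with the ambient filtration on $M$ so as to produce split graded pieces, and the confirmation that the pullback in the boundary construction really lands in $\mathscr E(\mathscr S)$ and not merely in $\mathscr E$ — both provided by the constructions of \S 3.
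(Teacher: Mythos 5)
Your first step (verifying that $M^j\to M\to M/M^j$ belongs to $\ES$) and the compatibility assertion are correct; the explicit computation of the induced filtrations on $M^j$ and $M/M^j$ is a legitimate expansion of what the paper leaves to the definition of $(\AAA,\ES)$. The gap lies in the middle of your argument: you assert that exactness at the three right-hand positions ``is a formal diagram chase in $(\AAA,\ES)$ \dots in the style of the classical construction.'' It is not, and the paper is explicit on this point. For a Quillen exact category whose idempotents need not split --- precisely the situation here, as footnote 1 and the remark following the proposition emphasize --- only the four-term sequence ending at $\Ext^1_{\ES}(S,M^j)$ is formally available (Proposition \ref{prop4.3}(a)); the general six-term sequence of \cite{DRSS99} requires split idempotents, and \cite{Bu10} does not supply one without idempotent completeness. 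The content of Proposition \ref{prop4.4} is exactly that the two further steps of exactness can nevertheless be forced using the special structure of $\ES$.

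Concretely, exactness at the fourth term $\Ext^1_{\ES}(S,M^j)$ is obtained by comparison with mod--$H$: an element of $\ker g$ is killed by the classical pushout map, hence lies in the image of the classical connecting map, which coincides with $f$ because the first three terms (and $f$ itself) agree with their classical counterparts. Exactness at the fifth term $\Ext^1_{\ES}(S,M)$ is the genuinely delicate point and cannot be read off the classical sequence directly: $\Ext^1_{\ES}(S,M^j)$ is in general a \emph{proper} subgroup of $\Ext^1_H(S,M^j)$, so a class in $\ker\bigl(\Ext^1_{\ES}(S,M)\to\Ext^1_{\ES}(S,M/M^j)\bigr)$, while certainly in the image of the classical $g$, need not a priori come from an element of $\Ext^1_{\ES}(S,M^j)$. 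The paper closes this gap by using Proposition \ref{propA}(b) to reduce to $M=M^h$ (with $S\in{\mathscr S}_h$), and then splitting into the cases $j\geq h$, where $g$ is an isomorphism, and $j<h$, where Lemma \ref{critical} gives $\Ext^1_{\ES}(S,M^j)=\Ext^1_H(S,M^j)$, so that the first four terms agree with the classical sequence and the comparison argument applies once more. None of this appears in your proposal, and without it the claimed exactness at the fifth term is unproved.
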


\begin{proof} The last assertion that the sequence $M^j\to M\to M/M^j$ belongs  to $\ES$ follows  from the hypothesis that $M\in {\mathscr A}(\mathscr S)$ and the definition of $(\AS,\ES)$. 
All the maps are standard: the connecting map $f$ uses pullbacks, and the other $\Ext^1_{\mathscr E(\mathscr S)}$-maps arise  from
functorality (and use pushouts). The composition of any two consecutive maps is zero.   Note that 
$\Hom_{{\mathscr A}({\mathscr S})}(S,-)=\Hom_{\mathscr A}(S,-)=\Hom_H(S,-),$ when applied 
to ${\mathscr A}({\mathscr S})$. All
$\Ext^1_{{\mathscr E}(\mathscr S)}$--groups
are contained in (and are compatible with) their $\Ext^1_H$ counterparts. Now an element in the kernel of
$g$ is also in the kernel of its classical counterpart, so lies in the image of $f$, since the first three terms of the ``long
exact sequence'' are identical to those in the classical case (i.e., mod-$H$).

Now consider exactness at the 5th term. By Proposition \ref{propA}(b), we  may assume $M=M^h$. If $j\geq h$,
then $g$ is clearly an isomorphism and exactness at the
5th term follows. If  $j<h$, then $(M^j)^{h-1}=M^j$ and so, by
Lemma \ref{critical}, $\Ext^1_{\ES}(S, M^j)=\Ext^1_{H}(S, M^j)$. Thus,  the first four terms of the ``long
exact sequence'' are identical to those in mod-$H$. Now, exactness at the next term follows as before.
\end{proof}

\begin{rem} Observe that exactness of the first 5 terms of the proposition holds for any $S\in\mathscr A({\mathscr S})$, not just in
${\mathscr S}$. Also, as noted in Proposition \ref{prop4.3}(b), the $\Ext^1_{\ES}$-groups  above are all naturally
$\mathscr K$-modules. The proof of that proposition shows they are $\mathscr K$-submodules of the corresponding
$\mathscr K$-modules $\Ext^1_H$. All maps in the above proposition are $\mathscr K$-module maps.

When ``idempotents split'', there is a general 6 term exact sequence; see fn. 1. 
%(An idempotent $e:A\to A$ in an additive category is called split, if $e$ can be factored as
%$e=\alpha\beta$, $\alpha:B\to A$ and $\beta:A\to B$, where $\beta\alpha=1_Z$, i.e., $\beta$ is a retraction.) 
For any exact category satisfying the Quillen axioms, there is always a general 4 term exact sequence; see 
Proposition A.2(a).

\end{rem}

\begin{cor}\label{prevcor}Let $S\in\mathscr S_h$, and let $M\in\mathscr A(\mathscr S)$.

(a) The map $\Ext^1_{\ES}(S, M^{h-1})\to\Ext^1_{\ES}(S,M^h)\cong\Ext^1_{\ES}(S,M)$ is surjective.

(b) We have $\Ext^1_{\ES}(S,M)=0$ if and only if the map
$$\Hom_{{\mathscr A}({\mathscr S})}(S, M^h/M^{h-1})\to\Ext^1_{\ES}(S, M^{h-1})$$
 is surjective.

(c) Suppose that $\Ext^1_{\ES}(S,M^{h-1})$ is generated as a $\mathscr K$-module by $\epsilon_1, \cdots,
\epsilon_n$. Let $M^{h-1}\to N\to S^{\oplus n}$ represent the element of 
$$\Ext^1_{\ES}(S^{\oplus n},M^{h-1})
\cong\Ext^1_H(S^{\oplus n},M^{h-1})\quad{\text{\rm (see Lemma \ref{critical})}}
$$ corresponding to $\chi:=\epsilon_1\oplus\cdots\oplus\epsilon_n$. Finally, suppose there is
a commutative diagram
$$\begin{CD}
M^{h-1} @>>> N @>>> S^{\oplus n}\\ @| @VVV @VfVV \\
M^{h-1} @>>> M^h @>>> M^h/M^{h-1}\end{CD}
$$
where $f$ is a morphism in $\AAA$. Then $\Ext^1_{\ES}(S,M)=0$.
\end{cor}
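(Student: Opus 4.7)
The plan is to reduce via part (b) to proving surjectivity of the connecting map
\[
\partial \colon \Hom_{\AAA}(S, M^h/M^{h-1}) \longrightarrow \Ext^1_{\ES}(S, M^{h-1})
\]
from Proposition \ref{prop4.4} applied with $j = h-1$. (As in the proof of (b), $\Hom_{\AAA}(S, M/M^{h-1}) = \Hom_{\AAA}(S, M^h/M^{h-1})$ since $S \in \mathscr{S}_h$ forces any morphism out of $S$ to land in the height-$h$ part of $M/M^{h-1}$.) Recall that $\partial$ is defined by pullback: $\partial(\phi) = \phi^{*}\xi$, where $\xi \in \Ext^1_{\ES}(M^h/M^{h-1}, M^{h-1})$ denotes the class of the canonical conflation $M^{h-1} \to M^h \to M^h/M^{h-1}$.

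First, I would observe that the commutative square in the hypothesis expresses the conflation $M^{h-1} \to N \to S^{\oplus n}$ as the pullback of $\xi$ along $f \colon S^{\oplus n} \to M^h/M^{h-1}$: this is just the universal property of pullback in $H$-mod, combined with Lemma \ref{critical} (applicable since $S^{\oplus n} \in \mathscr{S}_h$) to interpret the $\Ext^1_H$-class as an $\Ext^1_{\ES}$-class. Thus $\chi = f^{*}\xi$ in $\Ext^1_{\ES}(S^{\oplus n}, M^{h-1})$.

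Next, fix an arbitrary $\epsilon \in \Ext^1_{\ES}(S, M^{h-1})$ and write $\epsilon = \sum_{i=1}^n k_i \epsilon_i$ with $k_i \in \mathscr{K}$, using that the $\epsilon_i$ generate this $\mathscr{K}$-module. Define $g \colon S \to S^{\oplus n}$ by $g = \sum_i k_i \iota_i$, where $\iota_i$ is the $i$-th coordinate inclusion. By $\mathscr{K}$-linearity and additivity of $\Ext^1$ in the first variable, combined with the canonical decomposition $\Ext^1(S^{\oplus n}, M^{h-1}) \cong \Ext^1(S, M^{h-1})^{\oplus n}$ effected by pulling back along the $\iota_i$, I obtain $g^{*}\chi = \sum_i k_i (\iota_i^{*}\chi) = \sum_i k_i \epsilon_i = \epsilon$.

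Combining these two steps,
\[
\epsilon = g^{*}\chi = g^{*}(f^{*}\xi) = (f \circ g)^{*}\xi = \partial(f \circ g),
\]
so $\epsilon$ lies in the image of $\partial$. Since $\epsilon$ was arbitrary, $\partial$ is surjective, and part (b) immediately yields $\Ext^1_{\ES}(S, M) = 0$. The one point that deserves care, and which I expect to be the main (though mild) obstacle in writing the argument out, is confirming that every pullback invoked above genuinely produces a conflation in $\ES$ rather than only a short exact sequence of $H$-modules; this is handled throughout by Lemma \ref{critical}, since all targets involved are either $M^{h-1}$ or its subquotients with sources in $\mathscr{S}_h$, precisely the regime in which $\ES$-extensions and $H$-extensions coincide.
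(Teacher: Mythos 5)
Your argument is correct and follows essentially the same route as the paper: reduce to (b), read the given commutative square as saying $\chi=f^{*}\xi$, and then exhibit each class in $\Ext^1_{\ES}(S,M^{h-1})$ as $\partial$ of a map $S\to M^h/M^{h-1}$ factoring through $f$, with Lemma \ref{critical} guaranteeing the relevant pullbacks stay in $\ES$. The only (immaterial) difference is that the paper shows each generator $\epsilon_i=\partial(f\circ\iota_i)$ and concludes surjectivity from $\mathscr K$-linearity of the connecting map, whereas you build an explicit preimage $f\circ g$ for an arbitrary $\mathscr K$-combination.
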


\begin{proof} Assertion (a) follows from the 6-term exact sequence of Proposition \ref{prop4.4} and the fact that $\Ext^1_{\ES}(S,
M^h/M^{h-1})=0$. (The equality
follows from Proposition \ref{propA}(b).)  The proof of (b) is similar. Next, if $M^{h-1}\to N_i\to S$ corresponds to $\epsilon_i$, there is
a pullback (of the top row in the display above) with top row $\epsilon_i$. Thus, $\epsilon_i$ is a pullback of $M^{h-1}\to M^h \to M^h/M^{h-1}$ under the
evident composite $g_i:S\to S^{\oplus n}\overset f\to M^h/M^{h-1}$. Consequently, the image of $g_i\in\Hom_{\mathscr A}(S, M^h/M^{h-1})$ under the connecting homomorphism to $\Ext^1_{\ES}(S,M^{h-1})$ is $\epsilon_i$. Since $i$ was arbitrary, the connecting homomorphism in (b) is surjective. Hence, $\Ext^1_{\mathscr E(\mathscr S)}(S, M)=0$.\end{proof}

\begin{rem}The argument above has already appeared in a module theoretic form in \cite{DPS15}. However, the argument given there required stronger hypotheses, e.g., that $\Ext^1_H(S,S)=0$.
\end{rem}

\begin{thm}\label{thm4.7} Assume that each ${\mathscr S}_i$ is strictly generated as an additive category by finitely many objects, i.e.,
every object in ${\mathscr S}_i$ is isomorphic to a finite direct sum of a given finite set of objects in ${\mathscr S}_i$.
Let $M\in \AAA$.
 Then there exists an object $X$ in $\mathscr A(\mathscr S)$ and an inflation $M\overset{i}\to X$ such that $\Ext^1_{\ES}(S,X)=0$ for all
$S\in\mathscr S$.
In addition
 if $h$ is chosen minimal such that $M^{h-1}\neq 0$, it may  be assumed that the inflation induces an isomorphism $M^{h-1}\cong X^{h-1}$.
\end{thm}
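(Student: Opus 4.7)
The plan is to build $X$ by iterating a pushout construction one height at a time, starting from the minimum non-vanishing height of $M$ and climbing upward. The key engine is Corollary \ref{prevcor}(c): once a finite generating set for $\Ext^1_{\ES}(S,M^{h-1})$ is in hand, the associated universal extension, pushed out against $M^{h-1}\hookrightarrow M$, produces an enlargement in which $\Ext^1_{\ES}(S,-)$ has been killed for every $S\in\mathscr S_h$.

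In detail, at the step for height $h$, let $S_h$ be the direct sum of the finite additive generating set of $\mathscr S_h$ supplied by hypothesis. Because $\mathscr K$ is Noetherian and $H$, $S_h$, $M^{h-1}$ are all $\mathscr K$-finite, the classical group $\Ext^1_H(S_h,M^{h-1})$ is finitely generated over $\mathscr K$, and by Lemma \ref{critical} it coincides with $\Ext^1_{\ES}(S_h,M^{h-1})$. Pick generators $\epsilon_1,\ldots,\epsilon_n$, let $M^{h-1}\to N\to S_h^{\oplus n}$ be an extension representing $\chi=\epsilon_1\oplus\cdots\oplus\epsilon_n$, and form $\widetilde M$ as the pushout of $N\hookleftarrow M^{h-1}\hookrightarrow M$ inside mod--$H$. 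A direct calculation of filtrations gives $\widetilde M^{h-1}=M^{h-1}$, $\widetilde M^h/\widetilde M^{h-1}\cong M^h/M^{h-1}\oplus S_h^{\oplus n}$, and $\widetilde M^j/\widetilde M^{j-1}\cong M^j/M^{j-1}$ for $j\neq h$; in particular $\widetilde M\in\mathscr A(\mathscr S)$, and the obvious splitting at the height-$h$ section places the conflation $M\to\widetilde M\to S_h^{\oplus n}$ in $\mathscr E(\mathscr S)$. Taking $f$ to be the inclusion of $S_h^{\oplus n}$ as the second summand of $\widetilde M^h/\widetilde M^{h-1}$ verifies the hypothesis of Corollary \ref{prevcor}(c) with $\widetilde M$ in place of $M$, whence $\Ext^1_{\ES}(S,\widetilde M)=0$ for each direct summand of $S_h$, and so for every $S\in\mathscr S_h$.

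Iterate this construction through $h=h_{\min},h_{\min}+1,\ldots,h_{\max}$ and take $X$ to be the final module. Because each pushout alters the filtration only in heights $\geq h$, Proposition \ref{propA}(a) shows that the vanishings achieved at earlier heights survive every subsequent step, so $\Ext^1_{\ES}(S,X)=0$ for every $S\in\mathscr S$; the composite $M\to X$ is a composition of inflations and is therefore itself an inflation in $(\mathscr A(\mathscr S),\mathscr E(\mathscr S))$. For the final clause, suppose $h$ is minimal with $M^{h-1}\neq 0$, so that $M^{h-2}=0$; at the step for height $h-1$ we have $\Ext^1_H(S_{h-1},M^{h-2})=0$, the pushout is trivial, nothing is added at or below height $h-1$, and consequently $X^{h-1}=M^{h-1}$. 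The only non-formal point in the argument is the filtration bookkeeping that produces the identification $\widetilde M^h/\widetilde M^{h-1}\cong M^h/M^{h-1}\oplus S_h^{\oplus n}$ (which guarantees simultaneously that $\widetilde M\in\mathscr A(\mathscr S)$ and that $M\hookrightarrow\widetilde M$ is an inflation in $\mathscr E(\mathscr S)$); this follows from the explicit pushout description in mod--$H$ together with the definition $\widetilde M^h=\widetilde M\cap(\widetilde M_K)^h$.
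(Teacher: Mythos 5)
Your proof is correct and follows essentially the same route as the paper's: choose a finite set of $\mathscr K$-generators of $\Ext^1_H(S,M^{h-1})\cong\Ext^1_{{\mathscr E}({\mathscr S})}(S,M^{h-1})$, form the corresponding universal extension, push out along $M^{h-1}\hookrightarrow M$, invoke Corollary \ref{prevcor}(c) to kill $\Ext^1$ at height $h$, and use Proposition \ref{propA} to see that vanishing at lower heights survives the iteration. The only cosmetic differences are that the paper first constructs $X^h$ as the extension corresponding to $\chi\oplus\chi'$ and then pushes out along $M^h\to M$ (which produces the same module as your single pushout along $M^{h-1}\to M$), and that it restarts the induction at the minimal height where vanishing fails rather than marching through every height in order.
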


\begin{proof} Without loss of generality, we can assume that $M\not=0$, and also that $\Ext^1_{\ES}(S, M)\neq
0$ for some $S\in \mathscr S$.  Choose an integer $h$ minimal with
such a non-vanishing occurring for some $S\in \mathscr S_h$. Note that
$M^{h-1}\neq 0$ by Proposition \ref{propA}(c). We will next enlarge
$M$ to an object $X$, closer to the $X$ required in the theorem.

  Let $S_1,\cdots, S_m$ be generators for $\mathscr S_h$. For each index $i$, let $\epsilon_{i,1},\cdots,\epsilon_{i,n_i}$ be a finite
set of generators for $\Ext^1_{H}(S_i, M^{h-1})\cong\Ext^1_{\ES}(S_i,
M^{h-1})$. Form an extension
 $0\to M^{h-1}\to Y_i \to S_i^{\oplus n_i}\to 0$ corresponding to
 $\chi_i:=\epsilon_{i,1}\oplus\cdots\oplus\epsilon_{i,n_i}
\in\Ext^1_{H}(S_i^{\oplus n_i}, M^{h-1})$. Put $\chi:=\chi_1\oplus \cdots\oplus \chi_m$, and let
$\chi'\in\Ext^1_H(M^h/M^{h-1}, M^{h-1}) $ correspond to the
extension $0\to M^{h-1}\to M^h\to M^h/M^{h-1}\to 0$. Put $Z:=\oplus_i S_i^{\oplus n_i}
\oplus M^h/M^{h-1}$, and let $M^{h-1}\to X^h\to Z$ correspond to
$\chi\oplus\chi'$. Observe there is a  
commutative diagram
$$
\begin{CD}
M^{h-1}@ >>> Y_i @>>> S_i^{\oplus n_i}\\
@| @VVV @VVV\\
M^{h-1} @>>> X^h @>>> Z,\end{CD}
$$
in which the top row corresponds to $\chi_i$ and the bottom row to
$\chi\oplus\chi'$ .  
Comparison with 
Corollary \ref{prevcor}(c), allowing for the differences in notation, shows 
$\Ext^1_{\ES}(S_i,X^h)=0$ for all $i$. Thus, $\Ext^1_{\ES}(S,X^h)=0$, for all $S$ in ${\mathscr S}_h$.    Note we have
the same vanishing for $S\in{\mathscr S}_j$ with $j<h$,  by our
choice of $h$.  In all cases, we can replace $X^h$ with any $X'$
containing it with $(X')^h=X^h$.

     So far, we have not constructed an object $X$, only $X^h$. However,
the latter may be viewed as the middle term of an exact sequence of right
$H$-modules $0\to M^h\to X^h\to S'\to 0$, where $S':=\bigoplus
S_i ^{\oplus n_i}\in \mathscr S_h$.  This sequence clearly corresponds to a conflation in $\ES$.
(Note how $Z$ above is split.)  Applying a pushout construction using $M^h\to M$ (see
Propostion \ref{propA}(b) and its proof), we obtain an object $X$ in
$\mathscr A(\mathscr S)$ which contains a copy of $M$ under an
inflation, and has our constructed $X^h$ as its image under the
functor $(-)^h$. In addition $X^j=M^j$ for $j\leq h-1$.

    Applying Proposition \ref{propA}(b) again, we find that $\Ext^1_{\ES}(-,X)$
vanishes on all objects in $\mathscr S_j$ with $j\leq h-1$ (and, thus, $j\leq h$). Now repeat
the argument with $X$ in the role of $M$. This requires a bigger
$h$, unless $\Ext^1_{\ES}(S, X)$ already vanishes for all $S\in
\mathscr S$. Eventually the process stops, at which point $X$
satisfies all requirements of the theorem.
\end{proof}

\begin{rem}\label{4.8}
We do not have here  any canonical choice  for $X$. In the more local context of \cite{DPS15},
we did obtain some useful uniqueness results, effectively characterizing analogs of $X$ as injective hulls in a suitable category; see \cite[Props.~6.1\&6.2]{DPS15}.
\end{rem}

For the main result, we let $H$ be the Hecke algebra $\mathcal H$ over $
\mathscr K=\mathbb Z[t,t^{-1}]$ associated to a finite Coxeter system $(W,S)$. See
\cite[Ch. 8]{Lu03} for a very general ``unequal parameter'' version
of $\mathcal H$, and a corresponding Kazhdan-Lusztig cell theory. We use dual
left cell modules $S_\omega$ as generators for the various additive
categories ${\mathscr S}_i$. Here $\omega$ is a left cell in $W$.
There are also right cells, and two-sided cells. These are all
defined as equivalence classes associated to certain quasi-posets in
$W$. We shall make use of the opposite $\leq_{LR}^{\text{op}}$ of
the quasi-poset order $\leq_{LR}$, defined in \cite[Ch. 8]{Lu03}.
However, we view it as an order on the set $\Omega$ of left cells
(rather than on $W$). Using $\leq_{LR}^{\text{op}}$ for $\leq$ in
the discussion above Proposition \ref{prop2.2} earlier in this
paper, choose a height function $\htt: \Omega\to \mathbb{Z}$ on the quasi-poset $(\Omega,\leq_{LR}^\op)$. Thus, $\htt$ takes a constant value for left cells occuring in the same two-sided cell. Observe that, given
two left cells $\omega,\omega'$, if $S(\omega)_K$ and $S(\omega')_K$ have a common composition factor
then $\omega$ and $\omega'$ are contained in a two-sided cell and so $\htt(\omega)=\htt(\omega')$. It follows that $\htt$ defines a height function, still denoted $\htt$, on the
set $\Lambda$ of irreducible $\mathcal H_K$-modules, equipped with the same quasi-poset structure.  Now,
for each integer $i$, define ${ \mathscr S}_i$ as the additive
category generated by all dual left cell modules $S_\omega$ for
which $\htt(\omega)=i$.

For $\omega,\omega'\in\Omega$, define a preorder
$$\omega\preceq\omega'\iff \htt(\omega)<\htt(\omega'), \text{ or } \htt(\omega)=\htt(\omega')\text{ and }\omega\sim_{LR}\omega'$$
(compare  the order $\leq_f$ given on \cite[p.236]{DPS15}). Then $(\Omega,\preceq)$ becomes a quasi-poset and $\htt$ remains a height function with respect to $\preceq$. We remark that the preorder $\preceq$ is ``strictly compatible'' with the partition into two-sided cells, in the sense of \cite[Conj. 1.2]{DPS15}.

For each $\omega\in\Omega$, construct $X=X_\omega$ as in the above
theorem, with $M=S_\omega$. Choose positive integers $m_\omega$,
$\omega\in\Omega$, and let
$$T^\dagger=\bigoplus X_\omega^{\oplus m_\omega}.
$$
The use of chosen positive integers $m_\omega$ is a useful flexibility---all choices of $m_\omega>0$ lead to Morita equivalent endomorphism algebras $A^\dagger$ in the statement below.

 \begin{thm}\label{maintheorem} The $\mathbb Z[t,t^{-1}]$-algebra $A^\dagger:=\End_{\mathcal H}(T^\dagger)$ is standardly stratified. In fact,
 it has stratifying system consisting of all $\Delta(\omega):=\Hom_{\mathcal H}(S_\omega,T^\dagger)$, with $S_\omega$ ranging over the dual left cell modules and relative to the quasi-poset $(\Omega,\preceq)$. \end{thm}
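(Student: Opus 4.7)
The plan is to apply Theorem \ref{thm2.5} to the exact category $({\mathscr A}({\mathscr S}),{\mathscr E}({\mathscr S}))$ of Construction \ref{constructionII}, taking $B = \mathcal H$, the indexing set $\Lambda = \Omega$ equipped with the quasi-poset structure $\preceq$, the subobjects $S_\lambda := S_\omega$ (the dual left cell modules), and $T_\omega := X_\omega^{\oplus m_\omega}$, so that $T$ in Theorem \ref{thm2.5} coincides with $T^\dagger$ and $A^+ = A^\dagger$. Once Hypothesis \ref{hypothesis} is verified and each $\Delta(\omega) = \Hom_{\mathcal H}(S_\omega, T^\dagger)$ is shown to be $\mathscr K$-projective, Theorem \ref{thm2.5} produces the desired stratifying system, and the fact that $A^\dagger$ is standardly stratified then follows from \cite[Thm.~1.2.8]{DPS98}.

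For condition (1) of Hypothesis \ref{hypothesis}, apply Theorem \ref{thm4.7} with $M = S_\omega$: since $S_\omega \in {\mathscr S}_{\htt(\omega)}$, we have $(S_\omega)^j = 0$ for $j < \htt(\omega)$ and $(S_\omega)^{\htt(\omega)} = S_\omega$, so the last clause of the theorem gives $X_\omega^{\htt(\omega)} \cong S_\omega$, while the higher sections $X_\omega^j/X_\omega^{j-1}$ with $j > \htt(\omega)$ lie in ${\mathscr S}_j$ and are therefore direct sums of dual left cell modules $S_\tau$ with $\htt(\tau) = j$, hence $\omega \prec \tau$. Multiplying by $m_\omega$ preserves this filtration, and the direct-sum variant of (1) sanctioned by the last sentence of Remark \ref{Rem2.6} covers the resulting sections. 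For condition (2), suppose $\Hom_{\mathcal H}(S_\mu, X_\omega) \neq 0$; base-changing to $K$ gives a common irreducible composition factor $E_\lambda$ of $(S_\mu)_K$ and $(X_\omega)_K$. Because $(X_\omega)_K$ decomposes as a sum of $(S_\nu)_K$ with either $\nu = \omega$ or $\htt(\nu) > \htt(\omega)$, and because two dual left cell modules share a composition factor over $K$ only when their cells lie in the same two-sided cell, we deduce $\mu \sim_{LR} \nu$; if $\nu \neq \omega$ this forces $\htt(\mu) = \htt(\nu) > \htt(\omega)$, while if $\nu = \omega$ it forces $\htt(\mu) = \htt(\omega)$ and $\mu \sim_{LR} \omega$. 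In either case $\omega \preceq \mu$.

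For condition (3), invoke the variant in Remark \ref{Rem2.6}, which reduces the requirement to $\Ext^1_{{\mathscr E}({\mathscr S})}(S_\omega, T^\dagger) = 0$ for every $\omega \in \Omega$. By construction of each $X_{\omega'}$ via Theorem \ref{thm4.7} we have $\Ext^1_{{\mathscr E}({\mathscr S})}(S, X_{\omega'}) = 0$ for every $S \in \mathscr S$, and this vanishing propagates through finite direct sums in the second variable to $T^\dagger$. Combined with Lemma \ref{lem3.8}, this makes $\Hom_{\mathcal H}(S_\omega, -)$ exact on conflations in ${\mathscr E}({\mathscr S})$, so applying it to the canonical filtration of $T^\dagger$ expresses $\Delta(\omega)$ as an iterated $\mathscr K$-extension of the groups $\Hom_{\mathcal H}(S_\omega, (T^\dagger)^j/(T^\dagger)^{j-1})$, each a finite direct sum of homomorphism spaces between dual left cell modules. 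The latter are $\mathscr K$-free by Kazhdan--Lusztig cell theory, and the successive extensions, being short exact sequences of finitely generated $\mathscr K$-modules whose quotients are $\mathscr K$-free, preserve $\mathscr K$-projectivity; this establishes the $\mathscr K$-projectivity of $\Delta(\omega)$ needed to invoke Theorem \ref{thm2.5}.

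The hardest step will be the Kazhdan--Lusztig input underpinning condition (2) --- namely the identification of composition factors of $(S_\mu)_K$ with the two-sided cell of $\mu$ --- because this is where the particular choice of the preorder $\preceq$ (and of the height function constant on two-sided cells) is essential, and where the quasi-poset structure on $\Omega$ matches up with the exact-category filtration structure built into ${\mathscr A}({\mathscr S})$. A subsidiary point to watch is the $\mathscr K$-projectivity argument for $\Delta(\omega)$ over $\mathscr K = \mathbb Z[t, t^{-1}]$, which is not a PID; here one relies essentially on the exactness furnished by Lemma \ref{lem3.8} to reduce to $\mathscr K$-free building blocks.
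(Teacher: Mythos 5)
Your reduction to Theorem \ref{thm2.5} (as modified by Remark \ref{Rem2.6}), with $T_\omega=X_\omega^{\oplus m_\omega}$, and your verification of conditions (1)--(3) of Hypothesis \ref{hypothesis} via Theorem \ref{thm4.7}, the common-composition-factor fact for dual left cell modules, and Lemma \ref{lem3.8}, is exactly the route the paper takes (and fills in details the paper leaves to the reader). The gap is in your final step, the $\mathscr K$-projectivity of $\Delta(\omega)$.

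First, Lemma \ref{lem3.8} concerns the \emph{contravariant} functor $\Hom_{\mathscr A(\mathscr S)}(-,X)$; it does not make the covariant functor $\Hom_{\mathcal H}(S_\omega,-)$ exact on conflations, so it cannot be used to filter $\Delta(\omega)=\Hom_{\mathcal H}(S_\omega,T^\dagger)$ by the sections $\Hom_{\mathcal H}(S_\omega,(T^\dagger)^j/(T^\dagger)^{j-1})$. In fact the covariant functor is genuinely \emph{not} exact here: by Corollary \ref{prevcor}(b), the vanishing $\Ext^1_{\mathscr E(\mathscr S)}(S_\omega,T^\dagger)=0$ is equivalent to the connecting map $\Hom(S_\omega,(T^\dagger)^{h}/(T^\dagger)^{h-1})\to\Ext^1_{\mathscr E(\mathscr S)}(S_\omega,(T^\dagger)^{h-1})$ being \emph{surjective} (with $h=\htt(\omega)$), and Theorem \ref{thm4.7} constructs $X_\omega$ precisely so as to force this surjectivity; the connecting map is therefore typically nonzero, and the six-term sequence of Proposition \ref{prop4.4} only yields an embedding $\Delta(\omega)\hookrightarrow\Hom_{\mathcal H}(S_\omega,(T^\dagger)^{h}/(T^\dagger)^{h-1})$, not an isomorphism onto it. Second, even if the target were $\mathscr K$-free, a submodule of a free module over $\mathbb Z[t,t^{-1}]$ need not be projective, since this ring has Krull dimension $2$; and the assertion that $\Hom_{\mathcal H}(S_\omega,S_\tau)$ is $\mathscr K$-free ``by Kazhdan--Lusztig cell theory'' is not a fact you can cite in this generality. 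The paper's actual argument is different and avoids all of this: since $S_\omega$ and $T^\dagger$ are finitely generated and projective over $\mathscr K$ (dual left cell modules have Kazhdan--Lusztig bases, and $T^\dagger$ is an iterated extension of such), $\Delta(\omega)=\Hom_{\mathcal H}(S_\omega,T^\dagger)$ is a reflexive $\mathscr K$-module, and over the regular domain $\mathbb Z[t,t^{-1}]$ of Krull dimension $2$ the Auslander--Goldman Lemma forces it to be projective; see \cite[Cor.~C.19, Lem.~C.17]{DDPW08} or \cite[Cor.~1.2.12]{DPS98}. You should replace your filtration argument by this reflexivity argument.
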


\begin{proof} The result follows by applying Theorem \ref{thm2.5}, as modified by Remark
\ref{Rem2.6}, by using Lemma \ref{lem3.8}, taking $\mathscr K={\mathbb Z}[t,t^{-1}]$.  The projectivity of $\Delta(\lambda)$ over $\mathscr K$  can be proved by \cite[Cor. C.19]{DDPW08} and
the argument after its proof, which uses the Auslander-Goldman
Lemma (see \cite[Lem. C.17]{DDPW08}). Alternatively, see \cite[Cor. 1.2.12]{DPS98}.  
 The projective
$A^\dagger$-modules for (SS1) and (SS3) in (2.1) may be taken as the
various $\Hom_{\mathcal H}(X_\omega,T^\dagger)$. We leave the straightforward
details to the reader.\end{proof}

\begin{rem}(a) We mention, with only a brief indication of the proof, that $T^\dagger$ can be chosen with the
regular module $\mathcal H$ as a direct summand. We do not yet know if it is
possible to do the same with other permutation module analogs. In the case of the regular module $\sH$ itself, one constructs a $\mathscr K$-split injective composite
 $$\sH\to\oplus_j(\sH/\sH^{j})\to T^\dagger$$ 
 and uses the well-known fact that $\sH$ is self-dual as a left ($\mathscr K$-torsion free) $\sH$-module (thus, ``injective relative to $\mathscr K$'').

(b) The referee asked if, in the more general context of Theorem \ref{thm4.7}, one always gets a standardly stratified algebra from the construction above Theorem \ref{maintheorem}, letting $\mathscr S_i$ be generated by the single module $H^i/H^{i-1}$. This is true when
 $\mathscr K$ is regular of Krull dimension at most 2 (e.g., $\mathscr K={\mathbb Z}[t,t^{-1)}]$) if each $H^i/H^{i-1}$ is projective over $\mathscr K$ (a property of dual left cell modules
 implicitly 
 used in the proof of Theorem \ref{maintheorem} to ensure $\Delta(\la)$ is projective over $\mathscr K$).
 Another positive answer occurs if $\mathscr K$ is a DVR or Dedekind domain, without restriction on $H^i/H^{i-1}$. The latter module is always torsion-free, but, in the generality of Theorem 4.7, not much more is known about it.

(c) In the general Lusztig setup discussed above, after Remark \ref{4.8}, one knows $\sH^i/\sH^{i-1}$ is a direct sum of dual left cell modules with a largely explicit action of $\sH$ available using a (generalised) Kazhdan--Lusztig basis.
The height function, which determines $\sH^i/\sH^{i-1}$, is also important. Though not
 needed for our argument above,  an explicit choice may usually be given using Lusztig's $a$-function. For instance, the $a$-function may be used in the ``split'' or ``quasi-split case'' (in the terminology of Lusztig \cite{Lu03}). These cases include all unknown instances of our conjecture \cite{DPS98, DPS15} mentioned in the introduction.  
\end{rem}

\begin{appendix}
\section{A summary of exact categories}
This brief appendix summarizes, for the convenience of the reader, some basic material concerning exact
categories. We closely follow Keller's treatment in the appendix to  \cite{DRSS99}. (See also
Keller's paper \cite{K90}.)

%Let $\mathscr A$ be an additive category. Thus, $\mathscr A$ contains a zero object $0$ so that,
%for any object $A$, $\Hom(A,0)$ (resp., $\Hom(0,A)$) has a unique element $0_{A,0}$ (resp., $0_{0,A}$). For
%another object $B$, let $0_{A,B}$ be the composite $0_{0,B}0_{A,0}$. It is required that there is a binary
%operation $+$ defined on each $Hom$--set $\Hom(A,B)$ making it into an abelian group with zero object $0_{A,B}$. Also,
%composition $\Hom(B,C)\times\Hom(A,B)\to\Hom(A,C)$, additive in each variable. Finally, $\mathscr A$
%admits finite products (or, equivalently, finite coproducts). For more discussion of additive categories see
%the elementary discussion \cite[\S6.1]{J89}. %Fix this discussion.%

Let $\mathscr A$ be an additive category. We do not repeat the standard definition, but refer to \cite[Chp. 9, \S1]{Mac94}
for a precise discussion.
A pair $(i,d)$ of composable morphisms $i:X\to Y$ and $d:Y\to Z$ in $\mathscr A$ is called exact if $i:X\to Y$ is the kernel of $d:Y\to Z$ and $d$ is the cokernel of $i$.
Let $\mathscr E$ be a class of exact pairs, which is closed under isomorphisms. If $(i,d)\in\mathscr E$, then
$i$ (resp., $d$) is called an inflation (resp., deflation), and the pair $(i,d)$ itself can be called a conflation. We often just write $X\overset{i}\to Y\overset{j}\to Z$ or merely $X\to Y\to Z$ to denote elements (i.e., conflations) in $\mathscr E$.

The pair $(\mathscr A,\mathscr E)$ is called an exact
category provided the following axioms hold:

\medskip
\noindent
 0. $1_0\in\Hom(0,0)$ is a deflation, where $0$ is the zero object in $\mathscr A$.

 \smallskip\noindent
1.  The composition of two deflations is a deflation.

\smallskip\noindent
2. Morphisms $Y\overset{d}\longrightarrow Z \overset{f}\longleftarrow Z'$ in $\mathscr A$ in which $d$ is a deflation can
be completed to a pullback diagram
$$\begin{CD} Y' @>d'>>Z'  \\
@Vf'VV @VfVV \\
Y @>d>> Z \end{CD}
$$
in which $d'$ is a deflation.

\smallskip\noindent
2$^\circ$. Morphisms $X'\overset{f}\longleftarrow X\overset{i}\longrightarrow Y$ in $\mathscr A$ in which $i$
is an inflation can be completed to a pushout diagram
$$\begin{CD} X @>i>> Y   \\
@VfVV @V{f'}VV \\
X' @>{i'}>> Y' \end{CD}
$$
in $\mathscr A$ in which $i'$ is an inflation.

\begin{rems}\label{RemDog} (a) The axioms above are part of Quillen's axioms \cite{Q73} for an exact category, and they are shown in \cite{K90} to be equivalent to the full set of axioms. Since the Quillen axioms are self-dual, it follows that any exact category
in the sense of the above conditions also satisfies each corresponding dual condition. For example, the composition of any two
inflations is an inflation.

(b)  Continuing the above remark, note that the opposite category ${\mathscr A}^\op$ inherits an exact category structure from that of $\mathscr A$. Now assume that
 $\mathscr A$ is small. (If one believes in the set-theoretic philosophy of universes, every ${\mathscr A}$ can be regarded as small in an appropriate set-theoretic universe.) Applying \cite[Prop. A2]{K90} to the opposite category $\mathscr A^\op$, we find that there is an abelian category $\mathscr B$ and faithful full embedding $G:{\mathscr A}\to\mathscr B$, such that
an exact pair $(i,d)$  belongs to $\mathscr E$ if and only if $0\to G(X)\overset{G(i)}\longrightarrow G(Y)\overset{G(d)}\longrightarrow G(Z)\to 0$ is a short exact sequence
in $\mathscr B$. Moreover, we can assume that the strict image $\mathscr M$ of $G$ (which is equivalent to $\mathscr A$) is closed under extensions in $\mathscr B$.

(c) Assume the setting of Axiom 2. Let $i:X\to Y$ (resp. $i':X'\to Y'$ ) be inflations with $(i,d)$ and $(i',d')$ in $\mathscr E$.
Then there is a commutative diagram
$$\begin{CD} X' @>i'>>Y' @>d'>>Z'  \\
@VVV @VVV @VVV \\
X @>i>> Y @>d>> Z .\end{CD}$$
The morphism $X'\to X$, induced
by the zero composition $X'\to Y'\to Y\to Z$, is an isomorphism, with inverse given by the map $X\to X'$ induced from the evident zero morphism $X\to Y'\to Z$ (where $X\to Y'$ is obtained by pull-back from the morphism $i:X\to Y$, and the zero morphism $X\to Z'$). This is all in $(\mathscr A, \mathscr E)$, but 
a similar construction may be made with a pullback of $G(Z')\to G(Z)$ and $G(d):G(Y)\to G(Z)$.
It follows easily that $G(Y')$ is a pullback in $\mathscr B$ of these two maps. Similar constructions apply for any exact embedding of $\mathscr A$ into an abelian category, or even an exact category; see \cite[Prop.~5.2]{Bu10}. A dual discussion applies for Axiom 2$^\circ$.

%it follows that the diagram in
%Axiom 2 is a pullback in $\mathscr B$ or in any abelian category in which $(\mathscr A,\mathscr E)$ is fully and exactly
%embedded. Similar remarks hold for Axiom 2$^\circ$.

(d) The embedding in (b) can be used to prove ``with elements'' that useful exact sequences belong to
$\mathscr E$. For example, if we put $f':Y'\to Y$ and $f: Z'\to Z$, then there is a sequence $Y'\overset{\iota}\to Z'\oplus Y\overset{\epsilon}\to Z$ of maps in $\mathscr A$, where $\iota$ is defined as the ``product map'' associated with $d', f'$ and $\epsilon$ the coproduct map of $-d,f$. We claim $(\iota,\epsilon)$ belongs to $\mathscr E$.
The analogous assertion for an abelian module category is easy to prove. (Arguing with the given notation, a pullback of $f, d$ may be constructed in $Z'\oplus Y$ as all elements $z'\oplus y$ with $f(z')=d(y)$; this is precisely the kernel of $\epsilon$, and is naturally isomorphic to any other pullback, such as $Y'$.) We may assume $\mathscr B$ has been replaced by such a category.
 Applying $G$ gives a sequence
$G(Y')\overset{G(\iota)}\longrightarrow G(Z')\oplus G(Y)\overset{G(\epsilon)}\longrightarrow G(Z)$ in $\mathscr B$. One can see this sequence is exact by the facts that (1) $G(\iota)$
 is the product map of $G(f')$ and $G(d')$ and $G(\epsilon)$  is the coproduct map of $G(-d)$ and $G(f)$; (2) applying $G$ to the right square of the digram above yields a pullback diagram by (c).
Thus, $(\iota,\epsilon)$ is exact in $\mathscr A$, that is, $(\iota,\epsilon)$ belongs to $\mathscr E$.

(e) The abelian category $\mathscr B$ can also be used to extend the exact sequence
in Proposition \ref{prop4.3} below to the right by one term, as in the argument for Proposition \ref{prop4.4}. As previously mentioned, \cite{DRSS99} effectively gives a general 6 term version, using the ``split idempotent`` hypothesis, which we cannot assume.
 \end{rems}

Let $({\mathscr A}, {\mathscr E})$ be an exact category. For $X,Z\in\mathscr A$, let ${\mathscr E}(Z,X)$ be the set of sequences
$X\to Y\to Z$ in $\mathscr E$. Define the usual equivalence relation $\sim$ on ${\mathscr E}(Z,X)$ by
putting 
$$(X\to Y\to Z)\,\sim\,(X\to Y'\to Z)$$ provided there is a morphism $Y\to Y'$ giving a commutative
diagram
$$\begin{CD} X @>>> Y @>>> Z\\
@| @VVV @|\\
 X @>>> Y' @>>> Z
 \end{CD}
 $$
 The morphism $Y\to Y'$ is necessarily an isomorphism (as follows from Remark \ref{RemDog}(b) and a diagram chase, for
 example). Let $\Ext^1_{\mathscr E}(Z,X)={\mathscr E}(Z,X)/\sim$.

\begin{prop}\label{prop4.3} (a) $\Ext^1_{\mathscr E}(Z,X)$ has a natural abelian group structure such that given any $A\to B\to C$
in $\mathscr E$ and object $Z\in\mathscr A$, there are exact sequences
$$0\to\Hom_{\mathscr A}(Z, A)\longrightarrow \Hom_{\mathscr A}(Z,B)\longrightarrow \Hom_{\mathscr A}(Z,C)
\overset{f}\longrightarrow\Ext_{\mathscr E}^1(Z,A)$$
where $f$ is defined by pullback as in Axiom 2. A dual contravariant version holds, using the
contravariant functor $\Hom_{\mathscr A}(-,Z)$ and pullback as in Axiom 2$^\circ$.

(b)  Let $\mathscr K$ be a fixed
commutative, Noetheran ring. If $\mathscr A$ is a $\mathscr K$-category, then $\Ext^1_{\mathscr E}(Z,A)$ is naturally a ${\mathscr K}$-module.
\end{prop}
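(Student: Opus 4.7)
The plan is to leverage the fully faithful embedding $G:\mathscr A\to\mathscr B$ from Remark~\ref{RemDog}(b), which realises $\mathscr A$ as a full subcategory of an abelian category $\mathscr B$ with strict image $\mathscr M$ closed under extensions, and with conflations in $\mathscr E$ corresponding exactly to short exact sequences in $\mathscr B$ whose three terms all lie in $\mathscr M$. Under $G$, the set $\Ext^1_{\mathscr E}(Z,X)$ is identified with the subset of the classical $\Ext^1_{\mathscr B}(G(Z),G(X))$ consisting of classes representable by extensions with middle term in $\mathscr M$; full faithfulness of $G$ matches the two equivalence relations. Since $\mathscr M$ is closed under extensions in $\mathscr B$, the middle term of any Baer sum of two such classes again lies in $\mathscr M$. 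Hence $\Ext^1_{\mathscr E}(Z,X)$ is an abelian subgroup of $\Ext^1_{\mathscr B}(G(Z),G(X))$, inheriting the natural abelian group structure.

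For the four-term sequence in (a), I would apply $G$ to the given conflation $A\to B\to C$ in $\mathscr E$ to produce a short exact sequence in $\mathscr B$, and invoke the classical exactness of
$$0\to\Hom_{\mathscr B}(G(Z),G(A))\to\Hom_{\mathscr B}(G(Z),G(B))\to\Hom_{\mathscr B}(G(Z),G(C))\overset{\partial}{\longrightarrow}\Ext^1_{\mathscr B}(G(Z),G(A)).$$
Full faithfulness of $G$ identifies the first three terms with their $\Hom_{\mathscr A}$ counterparts in the proposition. The classical connecting map $\partial$ sends a morphism $Z\to C$ to the class of the pullback extension; Axiom~2 ensures this pullback is a conflation in $\mathscr E$, so $\partial$ factors as the desired map $f$ through the subgroup $\Ext^1_{\mathscr E}(Z,A)\hookrightarrow\Ext^1_{\mathscr B}(G(Z),G(A))$. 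Exactness at the three Hom terms is immediate from that in $\mathscr B$, while exactness at $\Hom_{\mathscr A}(Z,C)$ follows because any $\phi$ with $f(\phi)=0$ in $\Ext^1_{\mathscr E}$ also has $\partial(G(\phi))=0$ in $\Ext^1_{\mathscr B}$, so lifts through $\Hom_{\mathscr B}(G(Z),G(B))=\Hom_{\mathscr A}(Z,B)$. The dual contravariant statement is obtained by applying the same argument to the opposite exact category $(\mathscr A^{\op},\mathscr E^{\op})$, which is exact by the self-duality noted in Remark~\ref{RemDog}(a).

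For part (b), scalar multiplication on $\Ext^1_{\mathscr E}(Z,X)$ by $a\in\mathscr K$ is defined by taking the pushout of a representing conflation along $a\cdot 1_X$ (equivalently, the pullback along $a\cdot 1_Z$), both of which yield conflations by Axioms 2 and $2^\circ$; the module axioms transfer from $\mathscr B$, which can be taken $\mathscr K$-linear. The only real obstacle, which turns out to be mild, is ensuring consistent passage between $\Ext^1_{\mathscr E}$ and $\Ext^1_{\mathscr B}$: one must confirm both that the classical connecting homomorphism produces representatives lying in $\mathscr E$ (provided by Axiom~2) and that no information is lost in restricting from $\mathscr B$ to the subgroup of extensions with middle term in $\mathscr M$. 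Everything else is inherited from the familiar abelian-category formalism via the embedding $G$.
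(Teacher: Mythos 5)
Your proposal is correct, but it proceeds differently from the paper. The paper proves (a) by performing the Baer-sum construction intrinsically in $(\mathscr A,\mathscr E)$ and simply citing MacLane for the standard verification, and it proves (b) by reducing to the case where $\mathscr A$ is a category of $\mathscr K$-modules and then writing down an explicit map $Y^{\#}\to Y'$ from the pushout (along $b\cdot 1_X$) to the pullback (along $b\cdot 1_Z$) to show the two $\mathscr K$-actions coincide --- the paper explicitly identifies that coincidence as the entire content of (b). You instead route everything through the embedding $G:\mathscr A\to\mathscr B$ of Remark A.1(b), realizing $\Ext^1_{\mathscr E}(Z,X)$ inside $\Ext^1_{\mathscr B}(G(Z),G(X))$ and inheriting the group and module structures; this is legitimate, and your key observations (the equivalence relations match under full faithfulness, extension-closure of $\mathscr M$ keeps Baer sums and scalar pushouts in the image, and Axiom 2 makes the classical connecting map factor through $\Ext^1_{\mathscr E}$) are exactly the right ones. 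Two remarks. First, extension-closure of $\mathscr M$ actually forces \emph{every} extension of $G(Z)$ by $G(X)$ to have middle term in $\mathscr M$, so your ``subgroup'' is in fact all of $\Ext^1_{\mathscr B}(G(Z),G(X))$; this does no harm, but it is worth realizing that the embedding induces an isomorphism on $\Ext^1$, not merely an injection. Second, your parenthetical ``(equivalently, the pullback along $a\cdot 1_Z$)'' in (b) is precisely the point the paper takes pains to prove; you discharge it by appealing to the classical agreement of the two actions in a $\mathscr K$-linear abelian $\mathscr B$, which is acceptable provided you note that the Gabriel--Quillen target can indeed be taken $\mathscr K$-linear. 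The trade-off: your argument is more uniform and avoids element computations, while the paper's is self-contained at the point where the real work lies and is tailored to the module-category setting that is the only one used in its applications.
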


\begin{proof} The usual argument involving the Baer sum ($\alpha+\beta=\nabla_X(\alpha\oplus\beta)\Delta_Z$)  proves (a); see \cite[p. 85, (5.4)]{Mac94}.  We next prove (b).
Using standard embedding theorems, we can reduce to the case where $\mathscr A$ is a ${\mathscr K}$-category of ${\mathscr K}$-modules
(We remark that this is the only case to which we make applications in this paper.). Assuming this, we have what
appears to be two actions of ${\mathscr K}$ on $\Ext^1_{\mathscr E}(Z,X)$, one through the action of ${\mathscr K}$ on Z, and one through its action on X.
The first of the two actions uses a pullback of multiplication by any given element b in ${\mathscr K}$ on Z, and the second uses a pushout of the $X\to Y\to Z$
action of b on X.  We take part (b) as asserting, in this context, that the actions are the same, and that is (all of) what
we will prove.

Suppose we are given an element of $\Ext^1_{\mathscr E}(Z,X)$ represented by $X\overset{i}\to Y\overset{d}\to Z$, and
let $b \in{\mathscr K}$.  Form the pullback and pushout
objects as above, denoting the pullback by $Y'$ and the pushout by $Y^\#$.   The pullback object is formed by all pairs
$(y,z)$ with $dy=bz (y \in Y, z\in Z)$. It is an
object in $\mathscr A$ which is  a subobject of $Y\oplus Z$.  There is an evident sequence $X\to Y'\to Z$, which we also call a pullback. The pushout object
$Y^\#$ is formed as a quotient of $X\oplus Y$ by the subobject $W$ consisting of all pairs $(-bx,ix)$, with $x\in X$. We represent an element of this
quotient as a bracketed pair $[x,y]$, with the representative pair $(x,y)$ well-defined only up to addition of an element of $W$. There is a corresponding
pushout sequence $X\to Y^\#\to Z$.  We claim this sequence represents the same element of $\Ext^1_{\mathscr E}(Z,X)$ as the pullback sequence with  $Y'$. To prove this,
all we have to do is exhibit a map $Y^{\#}\to Y'$ in the ${\mathscr K}$-category ${\mathscr A}$ giving the expected commutative diagram. Such a map may be defined by sending a pair $x,y \in X\oplus Y$ to $(by+ix,dy)\in Y\oplus Z$, a pair which is actually in
$Y'$, since $d(by+ix) =b(dy)$. Moreover, the map has $W$ in its kernel since, if $x\in X$,
$(b(ix)+(-bx), d(ix))=(0,0)$. Thus,  induces a map to $Y^{\#}\to Y'$.  We leave it to the reader to check the required commutativites.  This proves the claim and completes the proof of part (b).\end{proof}

 For a relatively recent survey of exact categories, starting from the Quillen axioms (though without any explicit discussion of
 $\Ext^1_{\mathscr E}$), see \cite{Bu10}.

\section{Idempotent ideals}

The following result is proved in \cite{CPS90}. For convenience, we indicate a short proof.

\begin{prop}\label{prop5.1} Let $J$ be an idempotent ideal in a ring $A$. Assume that $_AJ$ is projective. Let
$M,N$ be $A/J$-modules. For any integer $n\geq 0$, inflation provides an isomorphism
$$ \Ext^n_{A/J}(M,N)\overset\sim\longrightarrow\Ext^n_A(M,N)$$
of abelian groups. (On the right hand side, $M,N$ are regarded as $A$-modules through the morphism
$A\to A/J$.)\end{prop}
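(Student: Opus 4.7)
My plan is to reduce the statement to a vanishing result: $\Ext^i_A(A/J,N)=0$ for all $i\geq 1$ and every $A/J$-module $N$. Once this is established, the proposition follows by a standard acyclic resolution argument. Specifically, take any projective resolution $P_\bullet\to M$ in $(A/J)$-mod. The key observation is that $\Hom_{A/J}(P,N)=\Hom_A(P,N)$ whenever $P$ and $N$ are $A/J$-modules, since $J$ acts as zero on both sides. Combined with the vanishing, each $P_n$ will be $\Ext_A$-acyclic for $N$, so $P_\bullet$ computes $\Ext^*_A(M,N)$ as well as $\Ext^*_{A/J}(M,N)$, and the inflation map is visibly the induced isomorphism.

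To establish the vanishing, I would use the short exact sequence $0\to J\to A\to A/J\to 0$ and the associated long exact $\Ext$-sequence. Since $A$ is $A$-projective, $\Ext^i_A(A,N)=0$ for $i\geq 1$, so the connecting maps give $\Ext^i_A(J,N)\cong \Ext^{i+1}_A(A/J,N)$ for $i\geq 1$. The hypothesis that ${}_AJ$ is projective immediately gives $\Ext^i_A(J,N)=0$ for $i\geq 1$, and hence $\Ext^i_A(A/J,N)=0$ for $i\geq 2$.

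The remaining, and most characteristic, step handles $i=1$. The relevant portion of the long exact sequence is
$$\Hom_A(A,N)\longrightarrow \Hom_A(J,N)\longrightarrow \Ext^1_A(A/J,N)\longrightarrow 0,$$
so it suffices to show $\Hom_A(J,N)=0$ for any $A/J$-module $N$. This is where the idempotency $J=J^2$ enters: for any $A$-linear $\varphi\colon J\to N$, one has $\varphi(J)=\varphi(J\cdot J)=J\cdot\varphi(J)\subseteq JN=0$, since $N$ is killed by $J$. Hence $\varphi=0$.

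Finally, one extracts the proposition. Combining the two vanishings with the fact that every projective $A/J$-module is a direct summand of a free module $(A/J)^{(I)}$ gives $\Ext^i_A(P,N)=0$ for $i\geq 1$, any projective $P\in (A/J)\text{-mod}$, and any $A/J$-module $N$. Then a dimension-shift induction using $0\to K\to P_0\to M\to 0$ (noting that $K$ is again an $A/J$-module) completes the proof; alternatively, one can phrase this as a Grothendieck-type argument with an acyclic resolution. The only genuinely substantive ingredient is the $\Hom_A(J,N)=0$ identity above; everything else is routine long-exact-sequence bookkeeping.
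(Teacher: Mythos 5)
Your proposal is correct and follows essentially the same route as the paper: the short exact sequence $0\to J\to A\to A/J\to 0$ together with projectivity of ${}_AJ$ kills $\Ext^n_A(A/J,N)$ for $n\geq 2$, idempotency gives $\Hom_A(J,N)=0$ and hence the $n=1$ vanishing, and then projective $A/J$-modules are acyclic for $\Hom_A(-,N)$, so a projective resolution over $A/J$ computes both sides. Your write-up just makes explicit the bookkeeping that the paper compresses into one line.
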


\begin{proof} Using the short exact sequence $0\to J\to A\to A/J\to 0$ of left $A$-modules, the projectivity
of $_AJ$ implies that $\Ext^n_A(A/J,N)=0$ for $n>1$. Since $J^2=J$, $\Hom_A(J,N)=0$. Thus, any
projective $A/J$-module is acyclic for the functor $\Hom_A(-,N)$. The proposition follows.
\end{proof}

\end{appendix}

\end{document}